\title{
Hele-Shaw flow as a singular limit of a Keller-Segel system with nonlinear diffusion
}
\author{Antoine Mellet}
\address{Department of Mathematics, University of  Maryland, College Park, MD}
\email{mellet@umd.edu}
\thanks{
Partially supported by NSF Grant DMS-2009236.
}
\def\R{\mathbb R}
\def\eps{\varepsilon}
\def\pa{\partial}
\def\na{\nabla}
\def\div{\mathrm{div}\,}
\def\BV{\mathrm{BV}}
\newcommand{\J}{\mathscr{J}}
\newcommand{\G}{\mathscr{G}}
\newcommand{\F}{\mathscr{F}}
\def\H{\mathcal H}
\numberwithin{equation}{section}
\newtheorem{theorem}{Theorem}[section]
\newtheorem{theorem*}{Theorem}
\newtheorem{remark}[theorem]{Remark}
\newtheorem{lemma}[theorem]{Lemma}
\newtheorem{proposition}[theorem]{Proposition}
\newtheorem{corollary}[theorem]{Corollary}
\begin{document}

\maketitle

\begin{abstract}
We study a singular limit of the classical parabolic-elliptic Patlak-Keller-Segel (PKS) model for chemotaxis with non linear diffusion. The main result is the $\Gamma$ convergence of the corresponding energy functional toward the perimeter functional.
Following recent work on this topic, we then prove that under an energy convergence assumption, the solution of the PKS model converges to a solution of the Hele-Shaw free boundary problem with surface tension, which describes the evolution of the interface separating regions with high density from those with low density.
This result complements a recent work by the author with I. Kim and Y. Wu, in which the same free boundary problem is  derived from the incompressible PKS model (which includes a density constraint $\rho\leq 1$ and a pressure term): 
It shows that the incompressibility constraint is not necessary to observe phase separation   and surface tension phenomena.
\end{abstract}

\medskip
\medskip

\noindent {\bf Keywords:} Chemotaxis, Singular limit, Gamma-convergence, Free boundary problems,  Hele-Shaw flow, Mean-curvature.
\medskip
\medskip

\noindent {\bf 2020 Mathematics Subject Classification:} 35K55, 35R35, 35A15, 53E10, 76D27

\medskip
\medskip
\medskip
\medskip

\section{Introduction}
We consider the classical parabolic-elliptic Keller-Segel model for chemotaxis with nonlinear diffusion (see \cite{Keller_Segel,Patlak,JL92,HP}):
\begin{equation}\label{eq:KS2}
\begin{cases}
\pa_t \rho - \Delta\rho^m +\beta \div(\rho \na \phi)=0 ,\\
- \eps^2  \Delta \phi  =  \rho - \sigma\phi  .
\end{cases}
\end{equation}
In this  model 
$\rho(t,x)$ is the density function of a population of bacteria (or other type of cells, amoebae or even animals)
which diffuses and is advected by a drift $\na \phi$ toward the region of higher concentration of a chemoattractant. 
This chemical is being secreted by the bacteria themselves and its 
 concentration $\phi(t,x)$ solves an elliptic equation. 
The parameter  $\beta$ is the cell sensitivity and $\sigma>0$ represents the degradation of the chemical. 
Importantly, the diffusion is nonlinear and we assume that $m>2$ (we will also consider more general diffusion term - see Assumptions {\bf (H1)}-{\bf (H3)} below). 
Finally, we note that the chemical's diffusivity, denoted here by $\eps^2$ is destined to go to zero.

\medskip

The system \eqref{eq:KS2} is set on  an open bounded subset $\Omega \subset \R^d$ with smooth boundary in dimension $d\geq 2$. 

\medskip
  
In this model, the diffusion of the bacteria competes with the attracting  potential $\phi$ which causes the cells to aggregate. 
This competition has been well studied in the linear diffusion case $m=1$.
In particular, it is well-known that, for some initial conditions, the concentration of the bacteria will lead to 
finite time blow-up of the density ($\limsup_{t\to t^*} \| \rho(t)\|_\infty =\infty$  - see e.g. \cite{JL92}, \cite{HV}).
These blow-ups can be interpreted as the formation of saturated regions with very high value of the density function, 
but from a biological point of view, arbitrary large density of bacteria should not be allowed.
Various modifications of the classical Keller-Segel model have been proposed to prevent blow-up. There are  two main strategies: Enhancing the diffusion  or inhibiting the attraction when the density is large.
Our model \eqref{eq:KS2} with $m>1$ is an example of the former strategies and has been studied by several authors \cite{CC06,K05,sugiyama} (we refer to \cite{HP01} for models with inhibited attraction).

In some  recent papers \cite{KMW1,KMW2} (in collaboration with I. Kim and Y. Wu), we took a slightly different approach:
We considered a similar chemotaxis system, with linear diffusion ($m=1$) but with a hard incompressibility constraint $\rho\leq 1$. This constraint  requires the introduction of a pressure term in the density equation, as is common in fluid mechanics - see \cite{KMW1} for details.
This incompressible model can actually be obtained as the limit $m\to\infty$ of \eqref{eq:KS2} - but this is not our purpose here and $m$ will be fixed in the present paper.
The main result of  \cite{KMW1,KMW2} is the fact that the constraint $\rho\leq1$ leads to the formation of saturated patches $\{\rho(t)=1\}$ (regions of maximal densities) and   that the evolution of these patches  can be described by classical Hele-Shaw free boundary problems.
In particular, 
 we characterized (in \cite{KMW2}) the limiting behavior of the density when $\eps\ll1$ and $\beta\sim \eps^{-1}$:
In that regime, the attractive effect of the potential $\phi$ together with the constraint $\rho\leq1$ leads to the convergence of the density toward a characteristic function $\chi_{E(t)}$ (a phenomena known as phase separation). Furthermore, the localization of the potential $\phi$  when $\eps\ll1$ leads to surface tension phenomena. 
More precisely, under some assumptions on the  convergence of the energy, we proved that the evolution of $E(t)$ could be described by the Hele-Shaw problem with surface tension (see \eqref{eq:HS} below).
\medskip

The goal of the present paper is to show that this convergence occurs for the solutions of \eqref{eq:KS2} as well, without the hard constraint $\rho\leq 1$, but with nonlinear diffusion $m>2$. In other words,
when the diffusion is strong enough for large values of $\rho$ the balance between the attractive  potential and the diffusion leads, in a time scale of order $\eps^{-1}$ to phase separation and surface tension phenomena.

\medskip

Let us now write the rescaled boundary value problem that we will be working with in this paper: We are interested in the asymptotic behavior of the solutions of \eqref{eq:KS} 
when $\eps\ll1$ at time scale $\eps^{-1}$.
Rescaling the time variable accordingly and introducing  the relevant boundary (null flux) and initial conditions, we are  led to the following system of equations:
\begin{equation}\label{eq:weak}
\begin{cases}
\eps \pa_t \rho  -\Delta \rho^m  +\beta  \div( \rho \na \phi)=0 ,\quad & \mbox{ in } \Omega\times(0,\infty),\\
(- \na\rho^m +\beta \rho \na \phi)\cdot n = 0,\quad & \mbox{ on }\pa \Omega\times(0,\infty)\\
\rho(x,0) = \rho_{in}(x)\quad & \mbox{ in } \Omega
\end{cases}
\end{equation}
with $\phi(\cdot,t)$ solution of 
\begin{equation}\label{eq:phi0}
\begin{cases}
\sigma  \phi -\eps^2\Delta  \phi = \rho  &  \mbox{ in } \Omega\\
 \nabla \phi \cdot n = 0 & \mbox{ on } \pa\Omega.
\end{cases}
\end{equation}

\begin{remark}
The scaling of \eqref{eq:weak}-\eqref{eq:phi0} can also be interpreted as follows: 
Let $\bar \rho(\bar x,\bar t)$  be solution of the system  \eqref{eq:weak}-\eqref{eq:phi0} with $\eps=1$ (here $\bar x$, $\bar t$ denotes the microscopic variables) and a  large number of bacteria: 
$$\int_{\R^n}\bar \rho(\bar x,\bar t) \, d\bar x = \int_{\R^n}\bar \rho_{in}(\bar x) \, d\bar x  \gg 1.$$
We then introduce $\eps \ll1$ such that $ \int_{\R^n}\bar \rho_{in}(\bar x) \, d\bar x = \eps^{-n}$ and 
 rescale the time and space variables
 $$ x=\eps \bar x, \qquad t=\eps^{3} \bar t .$$
The function $\rho^\eps(x,t)=\bar \rho(\bar x,\bar t)$ then solves \eqref{eq:weak}-\eqref{eq:phi0}  and satisfies $\int_{\R^n} \rho^\eps(x,t) \, dx =1$.
\end{remark}

\medskip

The first part of our result concerns the phenomena of phase separation.
We note that the natural energy for \eqref{eq:weak}-\eqref{eq:phi0} is
$$
E_\eps(\rho) = 
\begin{cases}
\displaystyle \frac{1}{\eps} \int_\Omega \frac1{m-1} \rho(x)^m   - \frac \beta 2 \rho(x) \phi(x)\, dx & \mbox{ if } f(\rho)\in L^1(\Omega), \rho\geq 0 \\
\infty & \mbox{otherwise}
\end{cases}
$$
(where $\phi$ is given by \eqref{eq:phi0}).
We will prove (see Theorem \ref{thm:Gamma} for a precise statement) that there is a constant $A_\eps$ such that the functional $\rho\mapsto E_\eps(\rho)+A_\eps$ (which is also an energy for the system) 
$\Gamma$-converges when $\eps\to 0$ to 
$$
E_0(\rho) = 
\begin{cases}
\gamma\rho_c  P(E) & \mbox{ if } \rho = \rho_c \chi_E \in \BV(\Omega)\\
\infty & \mbox{otherwise}
\end{cases}
$$
for some constants $\gamma$ and $\rho_c$. This implies that
if the initial condition $\rho_{in}^\eps$ is such that $  E_\eps(\rho^\eps_{in})+A_\eps$ remains bounded (which holds in particular if the initial condition is of the form $\rho_c \chi_{E_{in}}$), then the solution 
 $\rho^\eps$ of \eqref{eq:weak}-\eqref{eq:phi0} converges to a characteristic function $\rho_c \chi_{E(t)}$.
 \smallskip
 
The second part of our result (see Theorem \ref{thm:conv1}) describes the evolution of this set $E(t)$ and shows that under a classical assumption on the convergence of the energy \eqref{eq:EA}, this evolution can be  described by the following Hele-Shaw free boundary problem with surface tension:
\begin{equation}
\label{eq:HS}
\begin{cases}
\Delta p = 0 & \mbox{ in } E(t),\qquad \na p\cdot n =0 \quad\mbox{ on } \pa\Omega \cap E(t)\\
p = \gamma_0  \kappa & \mbox{ on } \pa E(t) \cap\Omega \\
V = -\na p \cdot \nu & \mbox{ on } \pa E(t)\cap\Omega.
\end{cases}
\end{equation}
where $\kappa(x,t)$ denotes the mean-curvature of $\pa E(t)$ (with the convention that $\kappa\geq 0$ when $E$ is convex) and $V$ denotes the normal velocity of the interface $\pa E(t)$.
This problem must be supplemented with a contact angle condition at the triple junction $\pa E(t)\cap\pa\Omega$.
Since $\phi$ satisfies Neumann boundary condition, the contact angle condition  derived in \cite{KMW2} reads
\begin{equation} \label{eq:CA}
\cos \theta =0
\end{equation}
which simply states that the moving interface $\pa E(t)$ must be orthogonal to the fixed boundary $\pa\Omega$ ($\theta$ denotes the angle formed by $\pa E(t)$ and $\pa\Omega$).
\smallskip

We recall that \eqref{eq:weak}-\eqref{eq:phi0} (respectively \eqref{eq:HS}) is the gradient flow for the energy $E_\eps$ (respectively $E_0$) with respect to the Wasserstein metric on the manifold of probability measures. This fact will not be used in this paper, but it provides a heuristic justification for the convergence result Theorem \ref{thm:conv1} 
and it underscores the importance of the $\Gamma$-convergence result, Theorem \ref{thm:Gamma}.

\medskip

The Hele-Shaw free boundary problem with surface tension   \eqref{eq:HS}  is a  very classical model, classically introduced to model the 
motion of the interface separating two immiscible fluids in an unbounded
Hele-Shaw cell. 
It has been derived in various frameworks, in particular as the limit of the discrete time approximation of a gradient flow for the perimeter functional (see for instance \cite{OttoL,CL})
and as the sharp interface limit of a
Cahn-Hilliard equation with degenerate mobility (see   \cite{Glasner,Laux1}).
The derivation of \eqref{eq:HS} in \cite{KMW2} was the first to make a direct connection between this model and chemotaxis phenomena, but it should be noted that the energy functional used in that paper shares some similarities with the functional used in \cite{JKM,LO} to derive a related two-phase free boundary problems.

\medskip

We point out that the density constrained model that we considered in \cite{KMW1,KMW2} is the natural limit of our (unconstrained) model \eqref{eq:KS2} when $m\to \infty$. One would thus expect to recover the result of 
\eqref{eq:KS2} in the regime $m\gg1$ and $\eps\ll1$.
However, and this is the main object of this paper, it turns out that it is not necessary to impose the hard constraint $\rho\leq 1$ or to take the limit $m\to\infty$ in order
to observe these phenomena of phase separation and surface tension since
we derive the Hele-Shaw free boundary problem with surface tension \eqref{eq:HS} in the limit $\eps\ll1$ when $m$ is fixed satisfying $m>2$.

\medskip

To conclude this introduction, we mention that our result will be proved for general non-linear diffusion, not necessarily given by some power law. More precisely, we consider the system:
\begin{equation}\label{eq:KS}
\begin{cases}
\pa_t \rho + \div (\rho v) = 0 \quad & \mbox{ in } \Omega \times (0,\infty) \\
v = \eps^{-1}\na [- f'(\rho) +\beta    \phi] & \mbox{ in } \Omega \times (0,\infty)\\
- \eps^2  \Delta \phi  =  \rho - \sigma\phi & \mbox{ in } \Omega \times (0,\infty) \\
 \rho v \cdot n = 0 \, , \quad \na \phi\cdot n =0 \qquad &  \mbox{ on }\pa \Omega \times (0,\infty)
 \end{cases}
\end{equation}
which corresponds to  \eqref{eq:KS2} when
\begin{equation}\label{eq:mum}
 f(\rho) = \frac{1}{m-1}\rho^{m}.
 \end{equation}
One way to understand the conditions that will be required on the nonlinearity $f(\rho)$, is to note that \eqref{eq:phi0} gives $\phi= \frac 1 \sigma \rho + \mathcal O(\eps^2)$ and so the evolution of $\rho$ when $\eps^2\ll 0$ is approximated by
\begin{equation}\label{eq:KS'}
\eps \pa_t \rho =  \div(\rho \na f'(\rho)) - \frac{\beta}{\sigma} \div(\rho \na \rho)=  \div\left (\rho\left[f''(\rho) -  \frac{\beta}{\sigma}\right] \na \rho\right)  .
\end{equation}
If $f''(\rho) -  \frac{\beta}{\sigma}\geq 0$, this is a well-behaved diffusion equation, but  we are interested here in models for which  $f''(\rho) -  \frac{\beta}{\sigma}$ is positive for large $\rho$ but negative for small $\rho$. In such cases,  \eqref{eq:KS'} is  a forward-backward diffusion equation, which typically leads to phase separation. 
We will list later some general assumptions on $f$ for our result to hold but we can already see that this applies when $f$ is given by \eqref{eq:mum}  with $m>2$.

While \eqref{eq:KS'} explains why phase separation occurs in our limit, it does not lead to surface tension phenomena. These can be  understood by looking at the next order approximation in  \eqref{eq:phi0}:
 $\phi =  \frac{1}{\sigma}\rho + \frac{\eps^2 }{\sigma} \Delta \phi  \sim \frac{1}{\sigma}\rho + \frac{\eps^2 }{\sigma^2} \Delta \rho +\mathcal O(\eps^4)$.
With this in mind, we see that  the system \eqref{eq:weak} is formally approximated by the following Cahn-Hilliard equation with degenerate mobility coefficient:
\begin{align}
 \pa_t \rho  
 & =-\eps^{-1} \div\left(\rho \na \left(  - f'(\rho) +\frac{\beta}{\sigma}  \rho +\frac{\beta}{\sigma^2} \eps^2 \Delta \rho \right)\right)\nonumber \\
&= -\div \left(\rho \na \left( \eps \frac{\beta}{\sigma^2} \Delta \rho - \frac 1 \eps W_0'(\rho)\right)\right)\label{eq:CH}
\end{align}
where the function $W_0(\rho)  = f(\rho) - \frac{\beta}{2\sigma} \rho^2 $ is a double well potential  when $f$ is given by \eqref{eq:mum}  with $m>2$ thanks to the constraint $\rho\geq 0$ (see Figure \ref{fig:1}) - with wells at $0$ and $\rho_c>0$.

\begin{figure}[]
 	 		\includegraphics[width=.35\textwidth]{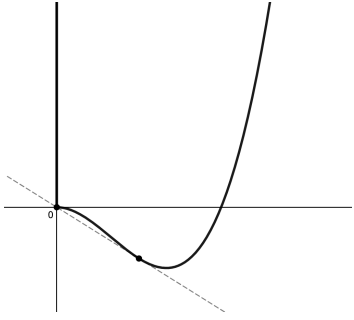}
			\vspace*{-40pt}
			
 	 		\caption{The double-well potential $W_0(\rho)=f(\rho) - \frac{\beta}{2\sigma} \rho^2$ when $f(\rho)=\rho^3$}
 	 		\label{fig:1}
 	 	\end{figure}

The singular limit $\eps\to 0$ 
for the classical Cahn-Hilliard equation with constant mobility was formally derived by Pego in \cite{Pego} and justified rigorously, for example, in \cite{ABC,Chen,Le}. This limit leads to phase separation and the free boundary separating the two phases evolves according to a two-phase Hele-Shaw free boundary problem with surface tension (also called  Mullins-Sekerka). In the case of degenerate mobility, which we are considering here, a similar formal analysis was performed by Glasner in \cite{Glasner} and leads to the one-phase Hele-Shaw problem \eqref{eq:HS}.
This convergence was justified rigorously  more recently by Kroemer and Laux in \cite{Laux1}. 
Of course, this approximation \eqref{eq:CH} is only valid if $\rho$ is smooth, so it cannot be used to justify the limit $\eps\to0$, but the result of \cite{Laux1} is similar to our result in many ways (in particular it also requires an energy convergence assumption).

Finally, we point out  that the limiting free boundary problem \eqref{eq:HS} is a one-phase problem because of  the degeneracy of the mobility coefficient $\rho$ at one of the two wells $\rho_-=0$. 
If we include in \eqref{eq:KS} a nondegenerate diffusion term $\Delta \rho$ as well, it adds a term $\rho \log\rho$ to the potential $W_0(\rho)$, which would then have two positive wells ($0<\rho_-<\rho_+$). While we do not pursue the corresponding  analysis here, the singular limit would lead in that case to the two-phase Hele-Shaw problem.

\medskip

\medskip

\medskip

\section{Notations and main results}

\subsection{The pressure $f'(\rho)$ and the potential $W(\rho)$}
When $f$ is given by \eqref{eq:mum} with $m>2$, the function 
$$h:= \rho \mapsto 
\frac 1 \rho \left( f(\rho) - \frac{\beta}{2\sigma} \rho^2\right)$$
has a unique minimum for $\rho\geq0$, attained when $\rho = \rho_c:=\left( \frac{\beta}{2\sigma} \right) ^{\frac{1}{m-2}} $.
We also denote 
$$a := - h(\rho_c) = \frac{m-2}{m-1} \left( \frac{\beta}{2\sigma}\right)   ^{\frac{1}{m-2}}$$
so that we have $h(\rho)\geq -a$ for all $\rho\geq 0$ with equality if and only if $\rho=\rho_c$.
\medskip

We now introduce the function
\begin{equation}\label{eq:W}
 W(\rho) := \rho(h(\rho)+a) = 
  f(\rho) - \frac{\beta}{2\sigma} \rho^2 + a\rho.
\end{equation}
This is a $C^2$ function on $[0,\infty)$ and the definition of $a$ implies that
\begin{equation}\label{eq:W0}
W(\rho)\geq 0 \mbox{ for all $\rho\geq 0\quad $ and  } \quad W(\rho)=0 \Leftrightarrow \rho = 0 \mbox{ or } \rho_c.
\end{equation}
Because of the natural constraint $\rho\geq 0$ in the problem, we can  thus view  $W$ as a double-well potential (we can also define $W$ in $\R$ by setting  $W(\rho)=+\infty$ for $\rho<0$).
Furthermore, we clearly have 
\begin{equation}\label{eq:Wqg}
W(\rho) \geq \nu \rho^2 \qquad \mbox{ for all } \rho\geq C.
\end{equation}

\medskip

As mentioned in the introduction, we do not need to assume that $f$ is a power law of the form \eqref{eq:mum} in order to prove our result.
All we need is to be able to define the double-well potential $W$ as in \eqref{eq:W}, satisfying
the properties listed above. 
This is the case if we assume that $f$ satisfies the following general conditions:
\medskip
\begin{itemize}
\item [{\bf (H1)}] The function 
$\rho \mapsto f(\rho)$ is a $C^2$ strictly convex function on $\R_+$ 
satisfying $f''(\rho)>0$ for all $\rho>0$ and 
$f''(\rho) \geq \delta$ for $\rho\geq \delta^{-1}$ (for some $\delta>0$).
\end{itemize}
\medskip

Since $f$ is defined up to a linear part, we can assume (without loss of generality) that
 $$ f(0)=f'(0)=0.$$

\begin{itemize}
\item [{\bf (H2)}] The function 
$h:= \rho \mapsto 
\frac 1 \rho \left( f(\rho) - \frac{\beta}{2\sigma} \rho^2\right)$
is bounded below on $\R_+$ and there exists a unique $\rho_c\in(0,\infty)$ such that 
\begin{equation}\label{eq:a0}
h(\rho_c) = \inf_{\rho\geq 0} h(\rho) = : - a. 
\end{equation}

\item[{\bf (H3)}] There exists $\nu > 0$ and $C\geq 0$ such that 
the function  $\rho \mapsto f(\rho)$ satisfies
\begin{equation}\label{eq:fquad3} 
f(\rho) \geq \left(\frac{\beta}{2\sigma} + \nu\right)   \rho^2 \qquad
 \mbox{ for all } \rho\geq C.
\end{equation}
\end{itemize}

Assumption {\bf (H1)} says that the pressure $\rho\mapsto f'(\rho) $ must be monotone increasing and grow at least linearly for large $\rho$. In particular it implies that $f(\rho)\geq \frac \delta  2 \rho^2$ for large $\rho$. 
Assumption {\bf (H2)} is also a condition on the quadratic growth of $f$, but it is more precise: 
The fact that $h$ is bounded below on $\R_+$ is immediate if, for example, we have $f(\rho) \geq \frac\beta {2\sigma} \rho^2$ for large enough $\rho$. This condition also implies that $\liminf_{\rho\to\infty} h(\rho)\geq 0=h(0)$ so that the minimum is reached in $[0,\infty)$.
If we have in addition that  $f(\rho) \leq \frac\beta {2\sigma }\rho^2$ for small $\rho$, then this minimum is reached in $(0,\infty)$.
The uniqueness of $\rho_c$ in {\bf (H2)} ensures that we have phase separation with two (and only two)  values of $\rho$ ($0$ and $\rho_c$). 
Assumption {\bf (H2)} implies that the potential $W$ defined by \eqref{eq:W} satisfies $W(\rho)\geq 0$ for all $\rho\geq 0$, and  {\bf (H3)} gives \eqref{eq:Wqg} which will be used in the proof of Theorem \ref{thm:conv1}.

\subsection{Gamma convergence of the energy functional}
As in \cite{KMW2}, the derivation of the Hele-Shaw free boundary problem in the limit $\eps\to0$ relies heavily on the properties of the energy functional associated to \eqref{eq:weak}-\eqref{eq:phi0}.
First, we recall that the solution $\phi$ of \eqref{eq:phi0} can be written as
$$ \phi^\eps(x,t) = \int_\Omega G_\eps(x,y) \rho(y,t)\, dy$$
for some kernel $G_\eps$ satisfying in particular $G_\eps(x,y)=G_\eps(y,x)$.
We then define
\begin{align*}
\J_\eps(\rho) 
& = \frac 1 \eps\left[  \int_\Omega f(\rho(x)) + a \rho(x)\, dx  - \frac \beta 2  \int_\Omega \int_\Omega G_\eps(x,y) \rho(x)  \rho(y)\, dy \, dx \right] \\
& = \frac 1 \eps   \int_\Omega f(\rho(x)) + a \rho(x)   - \frac \beta 2   \rho(x)\phi^\eps(x)  \, dx  .
\end{align*}
The system \eqref{eq:weak}-\eqref{eq:phi0} is a gradient flow for this energy with respect to the Wasserstein distance $W_2$ (the linear term $a\rho$ plays no role here since the mass is preserved, but it will be important later - the same is true of the scaling $\frac 1 \eps$).
This expression makes apparent the competition between the repulsive effect of diffusion (the $f$ term) and attractive effect of advection.
However, a simple but important computation shows that this energy can then also be written as follows:
\begin{equation}\label{eq:JW}
\J_\eps(\rho) = \frac 1 \eps \int_\Omega W(\rho)  \, dx +   \frac 1 \eps \int_\Omega\frac \beta {2\sigma}  (\rho-\sigma \phi^\eps)^2\, dx +\eps  \int_\Omega \frac\beta  2 |\na \phi^\eps|^2\, dx 
\end{equation}
with $W$ given by \eqref{eq:W}. 
Indeed, using \eqref{eq:phi0}, we can write
\begin{align*}
- \int_\Omega \rho \phi^\eps\, dx 
& = \frac 1 \sigma\int_\Omega -\rho^2 + (\rho-\sigma \phi^\eps)^2 + \sigma \phi(\rho-\sigma \phi^\eps) \, dx \\
& =\frac 1 \sigma \int_\Omega -\rho^2 + (\rho-\sigma \phi^\eps)^2 \, dx- \eps^2\int_\Omega \phi \Delta\phi^\eps \, dx \\
& = \frac 1 \sigma\int_\Omega -\rho^2 + (\rho-\sigma \phi^\eps)^2\, dx  + \eps^2 \int_\Omega|\na \phi^\eps|^2  \, dx .
\end{align*}
We deduce 
\begin{align*}
\J_\eps(\rho) 
& = \frac 1 \eps \int_\Omega f(\rho) + a\rho - \frac \beta {2\sigma} \rho^2  \, dx +  \frac 1 \eps \int_\Omega\frac \beta {2\sigma}  (\rho-\sigma \phi^\eps)^2\, dx +\eps  \int_\Omega \frac\beta  2 |\na \phi^\eps|^2\, dx 
\end{align*}
which is \eqref{eq:JW} (see \eqref{eq:W} for the definition of $W$).
\medskip

We recall (see \eqref{eq:W0}) that $W$ is a double-well potential. The functional  \eqref{eq:JW} is thus reminiscent of the classical Modica-Mortola functional, except that the double-well potential involves $\rho$ while the gradient term involves $\phi^\eps$. But the middle term controls the $L^2$ distance between $\rho$ and $\sigma \phi^\eps$. It is thus not unreasonable to think that this functional behaves, for small $\eps$, like the Modica-Mortola functional and $\Gamma$-converges to the perimeter functional. 
This will indeed be our first result. 
To state this, we first recall that the solution of \eqref{eq:KS} satisfies $\rho\geq 0$ and $\int_\Omega \rho\, dx = m_0$ (with $m_0$ the initial mass).
In what follows we take $m_0=1$ and define
\begin{equation}\label{eq:Jeps}
\J_\eps(\rho) = 
\begin{cases}
\displaystyle \frac 1 \eps \int_\Omega W(\rho)  \, dx +   \frac 1 \eps \int_\Omega\frac \beta {2\sigma}  (\rho-\sigma \phi^\eps)^2\, dx +\eps  \int_\Omega \frac\beta  2 |\na \phi^\eps|^2\, dx &  \mbox{ if } \rho\in L^2(\Omega), \;\rho\geq 0 \mbox{ and } \int_\Omega \rho=1\\
\infty & \mbox{otherwise in $L^1(\Omega)$}
\end{cases}
\end{equation}
(we could also remove the condition $\rho\geq 0$ and extend the definition of $W$ by setting $W(\rho) = +\infty$ for $\rho<0$).
The first result of the paper is:
\begin{theorem}\label{thm:Gamma}
Assume that $\Omega$ is an open subset of $\R^d$ with Lipschitz boundary and that $f$ satisfies
{\em ({\bf H1}), ({\bf H2})}.
Then 
the energy functional  $\J_\eps$ $\Gamma$-converges to $\J_0$ defined by
$$
\J_0(\rho)= \begin{cases}
\displaystyle \gamma  \int_\Omega |\na \rho|   & \mbox{ if } \rho \in \BV(\Omega;\{0,\rho_c\}) \mbox{ and } \int_\Omega \rho=1 \\
\infty & \mbox{ otherwise in $L^1(\Omega)$}
\end{cases}
$$
for some constant  $\gamma $ defined by \eqref{eq:sigma} below.
More precisely, we have 
\item[(i)] {\bf liminf property}: For all sequence $\rho^\eps \in L^1(\Omega)$ such that $\rho^\eps \to \rho$ in $L^1(\Omega)$, we have
$$\liminf_{\eps\to 0 } \J_\eps(\rho^\eps) \geq \J_0(\rho).$$
\item[(ii)] {\bf limsup property}: For all $\rho \in L^1(\Omega)$, there exists a sequence $\rho^\eps\in L^1(\Omega)$ such that $\rho^\eps\to\rho$ in  $L^1(\Omega)$ and 
$$\limsup_{\eps\to 0}  \J_\eps(\rho^\eps) \leq \J_0(\rho).$$
\end{theorem}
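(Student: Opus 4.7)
The plan is to follow the classical Modica–Mortola $\Gamma$-convergence scheme applied to the explicit form \eqref{eq:JW} of $\J_\eps$, adapted to the fact that the gradient term acts on the smoother quantity $u^\eps := \sigma\phi^\eps$ rather than on $\rho^\eps$ itself. The coupling is controlled by the middle term of \eqref{eq:JW}, which gives the a priori estimate $\|\rho^\eps - u^\eps\|_{L^2(\Omega)}^2 \leq (2\sigma/\beta)\,\eps\,\J_\eps(\rho^\eps)$, so $\rho^\eps$ and $u^\eps$ agree in $L^2$ in the limit $\eps\to 0$. The natural candidate for the limit constant is the optimal one-dimensional transition cost
\[
\gamma \rho_c = \inf\left\{\int_\R W(q) + \frac{\beta}{2\sigma}(q-\sigma\Psi)^2 + \frac{\beta}{2}(\Psi')^2\,dy :\ \sigma\Psi - \Psi'' = q,\ q(-\infty)=0,\ q(+\infty)=\rho_c\right\},
\]
obtained by plugging a 1D profile transitioning between the two wells of $W$ into \eqref{eq:JW} after rescaling $y=r/\eps$.

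For the liminf, take $\rho^\eps\to\rho$ in $L^1$ with $\J_\eps(\rho^\eps)\leq M$. First, $\int W(\rho^\eps)\,dx \leq M\eps\to 0$ together with (H2), which says $W\geq 0$ has zero set exactly $\{0,\rho_c\}$, forces (after extracting an a.e.-convergent subsequence) $\rho = \rho_c\chi_E$ for some measurable $E\subset\Omega$; the quadratic growth inherited from (H1) supplies the $L^2$ compactness of $\rho^\eps$ and of $u^\eps\in H^1(\Omega)$, with $u^\eps\to\rho$ in $L^2$. To extract the perimeter constant I would blow up at $\mathcal{H}^{d-1}$-almost every point $x_0\in\partial^*E$: in normal coordinates $y=d_E(x)/\eps$, the rescaled pair $(\rho^\eps,u^\eps)$ produces (along subsequences) admissible competitors for the 1D problem above, so the local energy density of $\J_\eps$ with respect to the perimeter measure is bounded below by $\gamma\rho_c$. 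Integrating via the Besicovitch differentiation theorem yields $\liminf_{\eps\to0}\J_\eps(\rho^\eps)\geq \gamma\rho_c P(E) = \J_0(\rho)$.

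For the limsup I appeal to the density of smooth sets in sets of finite perimeter (with the orthogonality at $\partial\Omega$ preserved by the approximation) to reduce to the case where $\partial E\cap\Omega$ is of class $C^2$ and meets $\partial\Omega$ orthogonally. I then take the recovery sequence $\rho^\eps(x) = q\bigl(d_E(x)/\eps\bigr)$, cut off outside a tubular neighborhood of $\partial E$, where $(q,\Psi)$ is a near-minimizer of the 1D problem above. The corresponding $\phi^\eps$ from \eqref{eq:phi0} is approximated near the interface by $\Psi(d_E(x)/\eps)/\sigma$, with a curvature-driven error of order $\eps$ and an exponentially small error from the matching to the bulk values $0$ and $\rho_c$. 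The co-area formula with $r=d_E(x)$ and $y=r/\eps$ converts each of the three terms of \eqref{eq:JW} into its 1D counterpart multiplied by $P(E)+o(1)$, and summing yields $\limsup\J_\eps(\rho^\eps)\leq \gamma\rho_c P(E) = \J_0(\rho)$.

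The principal obstacle is the liminf. Unlike classical Modica–Mortola, the two-term AM–GM inequality applied only to the first and third integrands in \eqref{eq:JW} does not recover the full three-term optimal constant $\gamma$, because the middle term genuinely contributes to it; hence one cannot simply discard the $L^2$ coupling and work with $u^\eps$ alone. One is forced into the blow-up reduction to the 1D problem, and the nonlocal character of $\phi^\eps=(\sigma-\eps^2\Delta)^{-1}\rho^\eps$ (which prevents a clean slicing argument) together with the Neumann condition on $\partial\Omega$ are the technical points that must be absorbed by the $L^2$ control between $\rho^\eps$ and $u^\eps$ and by careful localization near, and away from, $\partial\Omega$.
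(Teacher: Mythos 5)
Your plan diverges from the paper's proof in a way that leaves the hard steps undone, and the reason you give for rejecting the simpler route is based on a miscount of which terms to pair. You claim that ``the two-term AM--GM inequality applied only to the first and third integrands in \eqref{eq:JW} does not recover the full three-term optimal constant $\gamma$, because the middle term genuinely contributes to it; hence one is forced into the blow-up reduction.'' The middle term does contribute, but not in a way that forces a blow-up argument: the paper's key observation is the pointwise inequality $W(\rho)+\frac{\beta}{2\sigma}(\rho-\sigma\phi)^2\geq g(\phi)$ with $g(s)=\inf_{\rho\geq0}[W(\rho)+\frac{\beta}{2\sigma}(\rho-\sigma s)^2]$, i.e.\ one first absorbs the first \emph{two} terms into the effective double-well $g(\phi^\eps)$ and only then applies AM--GM against the gradient term. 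This yields $\J_\eps(\rho^\eps)\geq\F_\eps(\phi^\eps)$ with $\F_\eps$ a genuinely local Modica--Mortola functional in $\phi^\eps$ alone, and the standard $F(s)=\int_0^s\sqrt{2\beta g}$ trick plus BV lower semicontinuity gives the liminf with exactly the constant \eqref{eq:sigma} --- no blow-up, no Besicovitch differentiation, no need to pass the nonlocal constraint $\sigma\phi^\eps-\eps^2\Delta\phi^\eps=\rho^\eps$ through a rescaling. In your scheme the entire liminf rests on showing that blow-ups of the pair $(\rho^\eps,\phi^\eps)$ at $\mathcal H^{d-1}$-a.e.\ point of $\partial^*E$ are one-dimensional admissible competitors for your cell problem; you flag this as the technical point ``that must be absorbed'' but do not indicate how, and it is precisely the part that does not follow from the $L^2$ control $\|\rho^\eps-\sigma\phi^\eps\|_{L^2}^2\lesssim\eps$. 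You would also need to identify your constrained 1D infimum with \eqref{eq:sigma}; this does hold (the constraint $\sigma\Psi-\Psi''=q$ is the Euler--Lagrange equation in $\Psi$ of the unconstrained problem), but it is not addressed.

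The limsup has two further gaps. First, $\J_\eps$ is $+\infty$ unless $\int_\Omega\rho^\eps=1$, and a profile $q(d_E(x)/\eps)$ does not satisfy this; the paper selects the translation parameter $t_\eps$ in the Modica--Mortola family precisely so that the induced density $\rho^\eps={f^*}'(\psi^\eps-a)$ has unit mass. Second, your construction requires proving that the actual solution $\phi^\eps$ of \eqref{eq:phi0} with data $\rho^\eps$ is close to $\Psi(d_E(x)/\eps)/\sigma$ up to curvature and boundary-layer errors --- a nontrivial elliptic estimate, delicate near $\partial\Omega$. The paper sidesteps this entirely via the decoupled functional $\G_\eps(\eta,\psi)$ and the identity $\J_\eps(\rho)=\min_{\psi}\G_\eps(\rho,\psi)$ (Proposition \ref{prop:GJ}): since $\phi^\eps$ is the minimizer, $\J_\eps(\rho^\eps)\leq\G_\eps(\rho^\eps,\psi^\eps)=\F_\eps(\psi^\eps)$ for the explicit Modica--Mortola recovery sequence $\psi^\eps$, and no PDE estimate on $\phi^\eps$ is ever needed. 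I recommend you rebuild both halves of the argument around the function $g$ and the two variational identities of Proposition \ref{prop:GJ}.
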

The definition of the constant $\gamma$ appearing in the definition of $\J_0$ requires the introduction of the function
$$
 g(s) : = \inf_{\rho\geq 0}  \left[ W(\rho) +  \frac {\beta}{ 2\sigma} (\rho-\sigma s)^2\right]
$$
which plays an important role in the proof.
We note that $g(s)\geq 0$ for all $s\geq 0$ and $g(0)=g(\rho_c/\sigma) =0$ (in fact $g$ is a double-well potential).
The constant $\gamma$ appearing in the definition of $\J_0$ is then given by
\begin{equation}\label{eq:sigma}
\gamma:=\frac 1 {\rho_c}\int_0^{\rho_c/\sigma } \sqrt{2\beta g(s)}\, ds.
\end{equation}

The proof of Theorem \ref{thm:Gamma} is presented in Section \ref{sec:Gamma}.
The key idea is to introduce the functional
$$ \G_\eps(\eta,\psi) =  \frac 1 \eps \int_\Omega W(\eta) + \frac1  2 (\eta-\psi)^2 +\eps \int_\Omega \frac1  2 |\na \psi|^2\, dx$$
for all functions $\eta\in L^2(\Omega) $ and $\psi\in H^1(\Omega)$ and notice (see Proposition \ref{prop:GJ}) that for a given $\rho$, the infimum of $\G_\eps(\rho,\psi)$ over all $\psi\in H^1(\Omega)$ is actually achieved when $\psi =\phi^\eps$ solution of \eqref{eq:phi0}.

\begin{remark}\label{rem:sup}
When $\rho(x) = \rho_c \chi_E \in  \BV(\Omega;\{0,\rho_c\})$, we have $W(\rho(x))=0$ a.e. and so
\eqref{eq:JW} reduces to
$$
\J_\eps(\rho_c \chi_E) =  \frac 1 \eps \int_\Omega\frac \beta {2\sigma}  (\rho-\sigma \phi)^2\, dx +\eps  \int_\Omega \frac\beta  2 |\na \phi|^2\, dx 
$$
This is the same functional that we studied in \cite{MW} (see \cite[Proposition 3.4]{MW}). 
It follows in particular from \cite[Proposition 2.3]{MW} and its proof that
$$ \J_\eps(\rho_c \chi_E) \leq C \int_\Omega |\na \rho| = C \rho_c P(E,\Omega)$$
for some constant $C$ independent of $\eps$ and 
$$ \lim_{\eps\to 0}  \J_\eps(\rho_c \chi_E) = \frac{\beta \rho_c }{4 \sigma^{3/2}}  \int_\Omega |D\rho|  = \frac{\beta \rho_c^2 }{4 \sigma^{3/2}} P(E,\Omega) .$$

The definition of $g(s)$ implies $g(s) \leq \frac{\beta}{2\sigma} \min\{ (\sigma s)^2, (\rho_c-\sigma s)^2\}$ (taking $\rho=0$ and $\rho=\rho_c$ in the infimum) which leads to
$$ \gamma \leq  \frac{\beta \rho_c}{4 \sigma^{3/2}}$$
with, in general, a strict inequality. We thus expect to have  $\lim_{\eps\to 0}  \J_\eps(\rho_c \chi_E) > \J_0(\rho_c \chi_E)$.
This remark shows that  we cannot simply take $\rho^\eps=\rho$ to prove the limsup property in Theorem \ref{thm:Gamma}. Instead, the proof of Theorem \ref{thm:Gamma}-(ii) will require the delicate construction of a recovery sequence $\rho^\eps$.
\end{remark}

\subsection{Weak solutions of \eqref{eq:weak}-\eqref{eq:phi0}}
The Patlak-Keller-Segel system of equations \eqref{eq:weak}-\eqref{eq:phi0} has been 
extensively studied. 
Many results 
concerning the existence of global in time solutions vs. finite time blow-up
have been obtained in the linear diffusion case $f(\rho) = \rho \log \rho$ with $\sigma=0$ (see for instance \cite{JL92,BCC12,CD14,BDP06}).
It is also well known that nonlinear diffusion can prevent blow-up and guarantee the existence of bounded solutions globally in time  (see \cite{sugiyama,K05,CC06}).
In dimension $2$, bounds on $\| \rho\|_{L^\infty(0,\infty;L^\infty(\Omega))}$  were established in \cite{CC06,K05} assuming that $\rho f''(\rho) \geq \kappa $ for large $\rho$ and when $\kappa\geq \kappa_*$ (which holds under our assumption {\bf (H1)}).
In  dimension $d\geq 2$, and when $f'(\rho) = \rho^{m-1}$,
solutions can be proved to be bounded $L^\infty(\Omega)$ globally in time without any restriction on the size of the initial data in the subcritical case $m> 2-\frac 2 d$, see \cite{sugiyama} (while finite time blow-up may occur for some initial data when $1<m<2-\frac 2 d$).
Similar bounds have been established in \cite{carrillo-wang} for very general nonlocal interaction potential with the same non-linear diffusion.

We will use the following  result:
\begin{theorem}\label{thm:existence}
 Let $\Omega$ be a bounded subset of $\R^d$ with $C^{1,\alpha}$ boundary 
 and assume that $\sigma,\beta>0$ and that $f(\rho)$ satisfies {\em ({\bf H1})}.
For any initial data  $\rho_{in } \in L^1\cap L^\infty(\Omega)$,  there exists a weak solution $\rho^\eps$ of  \eqref{eq:weak}-\eqref{eq:phi0} defined globally in time  satisfying
\item[(i)]  $\rho \in L^\infty(0,\infty;L^1 \cap L^\infty(\Omega))$, $\phi\in L^\infty(0,\infty;H^1(\Omega))$
\item[(ii)] $\rho$ is a solution of the continuity equation $\pa_t \rho + \div (\rho v) =0$ for some $v\in L^2((0,\infty)\times\Omega,d\rho)$, that is
\begin{equation}\label{eq:weak11}
\int_\Omega \rho_{in} (x) \zeta(x,0)\, dx + \int_0^\infty \int_\Omega \rho\, \pa_t\zeta + \rho v \cdot \na \zeta \, dx = 0 
\qquad \forall \zeta\in C^\infty_c([0,\infty)\times\overline \Omega).
\end{equation}
\item[(iii)] The flux $\rho v $ satisfies
\begin{equation}\label{eq:weak12}
\int_0^\infty \int_\Omega  \rho v\cdot \xi - \eps^{-1}\beta \rho\na \phi \cdot \xi - \eps^{-1} [ \rho f'(\rho ) -f(\rho)]\, \div\xi\, dx \, dt= 0
\end{equation}
for any vector field  $\xi \in C^\infty_c((0,\infty)\times\overline \Omega ; \R^d)$ such that $\xi \cdot n=0$ on $\pa\Omega$.
\item[(iv)] For all $t>0$, the function $\phi(\cdot,t)$ is the unique solution of \eqref{eq:phi0} in the following weak sense:
\begin{equation} \label{eq:phiweak}
\phi\in H^1(\Omega) , \quad \int_\Omega (\rho(\cdot,t)-\phi) \psi + \eps^2 \na \phi \cdot\na \psi\, dx=0 \qquad \forall \psi \in H^1(\Omega).
\end{equation}
\item[(v)] The following energy dissipation property holds:
\begin{equation}\label{eq:diss}
\J_\eps(\rho(t)) + \int_0^t \int_\Omega \rho |v|^2\, dx\, dt \leq \J_\eps(\rho_{in}) \qquad \forall t>0.
\end{equation}
\end{theorem}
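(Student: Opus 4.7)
The plan is to construct the solution as the limit of a non-degenerate approximation. I would introduce the regularization $f_\eta(\rho) := f(\rho) + \frac{\eta}{2}\rho^2$ (so that $f_\eta'' \geq \eta$) and a smooth, strictly positive approximation $\rho_{in}^\eta \to \rho_{in}$ of the initial data. The regularized density equation $\eps \pa_t \rho = \Delta P_\eta(\rho) - \beta\div(\rho\na\phi)$, with $P_\eta(\rho) := \rho f_\eta'(\rho) - f_\eta(\rho)$, is a non-degenerate quasilinear parabolic equation. Coupled with the (uniquely Lax-Milgram solvable) elliptic equation \eqref{eq:phi0}, this system admits a smooth global solution $(\rho_\eta, \phi_\eta)$ by a classical Schauder fixed-point iteration on the coupling $\phi$ (or a Galerkin scheme).

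For $\eta$-uniform estimates, mass conservation follows immediately from the no-flux boundary condition. The $L^\infty$ bound is obtained via a Moser/Alikakos iteration: testing the equation against $\rho_\eta^{p-1}$, one exploits assumption ({\bf H1}) (which gives $\rho f''(\rho) \geq \delta \rho$ for $\rho$ large) together with elliptic regularity for $\phi_\eta$ to show that, for $p$ large enough, the diffusive dissipation dominates the drift, yielding the bound $\|\rho_\eta\|_{L^\infty((0,T)\times\Omega)} \leq C(\|\rho_{in}\|_{L^\infty},T)$ uniformly in $\eta$; this is essentially the argument of \cite{sugiyama, K05, CC06}. The energy identity is obtained by testing the equation against $\eps^{-1}[f_\eta'(\rho_\eta) + a - \beta\phi_\eta]$: since $\phi_\eta$ is the symmetric Green's operator of \eqref{eq:phi0} applied to $\rho_\eta$, one has $\int \rho_\eta \pa_t \phi_\eta = \int \phi_\eta \pa_t \rho_\eta$, so the cross-term assembles into a full time derivative. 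This gives
$$\frac{d}{dt}\J_\eps^\eta(\rho_\eta) = -\int_\Omega \rho_\eta |v_\eta|^2\, dx, \qquad v_\eta := \eps^{-1}\na[-f_\eta'(\rho_\eta) + \beta \phi_\eta],$$
and after integration in time the uniform bound $\int_0^T\!\!\int_\Omega \rho_\eta|v_\eta|^2 \, dx\, dt \leq \J_\eps^\eta(\rho_{in}^\eta) \leq C$.

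For the passage $\eta \to 0$, weak-$*$ compactness of $\rho_\eta$ in $L^\infty$, the $L^2$-bound on $\na P_\eta(\rho_\eta)$, and the $L^2(0,T;H^{-1})$-bound on $\pa_t \rho_\eta = -\div(\rho_\eta v_\eta)$ yield, via Aubin-Lions, strong convergence $\rho_\eta \to \rho$ in $L^p((0,T)\times\Omega)$ for every $p<\infty$; elliptic regularity then upgrades this to $\phi_\eta \to \phi$ in $L^2(0,T;W^{2,p})$. One then passes to the limit in (ii)--(iv) using the key algebraic identity $\rho \na f'(\rho) = \na P(\rho)$, which allows the flux integral in \eqref{eq:weak12} to be rewritten with $\eps^{-1}P(\rho)\,\div\xi$ in place of $\eps^{-1}\rho\na f'(\rho)\cdot\xi$---a form that requires only the strong convergence of $\rho_\eta$ and $\phi_\eta$.

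The hard part is recovering the \emph{dissipation inequality} \eqref{eq:diss} rather than merely a bound on the limiting flux. The natural tool is the joint convexity and weak lower semicontinuity of the functional $(\rho, j) \mapsto \int |j|^2/\rho\, dx$ on densities and momenta (with the convention $|j|^2/0 := +\infty$ if $j \neq 0$, $0$ if $j = 0$). The momenta $j_\eta := \rho_\eta v_\eta$ are uniformly bounded in $L^2$ (since $\rho_\eta \in L^\infty$ and $\int \rho_\eta |v_\eta|^2 \leq C$), hence admit a weak limit $j$; the limit continuity equation then forces $j = \rho v$ for a unique $v \in L^2(d\rho)$, and lower semicontinuity gives
$$\int_0^t\!\!\int_\Omega \rho|v|^2\, dx\, ds \leq \liminf_{\eta\to 0} \int_0^t\!\!\int_\Omega \rho_\eta|v_\eta|^2\, dx\, ds.$$
Combined with the convergence $\J_\eps^\eta(\rho_{in}^\eta) \to \J_\eps(\rho_{in})$ from the choice of regularized initial data, this yields \eqref{eq:diss} and completes the argument.
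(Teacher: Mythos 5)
The paper does not actually prove Theorem \ref{thm:existence}: it treats existence as a known result and defers to the literature, pointing both to the parabolic--regularization/$L^\infty$--bound line of work (\cite{sugiyama,CC06,K05,Carrillo-Santambrogio}) and to variational constructions by a JKO minimizing-movements scheme as in \cite{KMW1,Laux1} (the latter being the most natural way to obtain the dissipation inequality \eqref{eq:diss} in exactly the stated form). Your proposal is a legitimate instantiation of the first route, and its overall architecture --- regularize, get $\eta$-uniform mass/$L^\infty$/energy bounds, pass to the limit using $\rho\na f'(\rho)=\na[\rho f'(\rho)-f(\rho)]$, and recover \eqref{eq:diss} from the joint lower semicontinuity of $(\rho,j)\mapsto\int|j|^2/\rho$ --- is sound and matches what the cited references do. Two technical points in your sketch are stated more strongly than they hold and need repair, though both are repairable by standard means. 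First, $f_\eta(\rho)=f(\rho)+\tfrac\eta2\rho^2$ does \emph{not} make the equation non-degenerate: the diffusion coefficient is $P_\eta'(\rho)=\rho f_\eta''(\rho)\geq\eta\rho$, which still vanishes at $\rho=0$; you must either also add a non-degenerate viscosity term $\eta\Delta\rho$ (equivalently a $\rho\log\rho$ contribution to $f_\eta$) or work with strictly positive data and propagate a positive lower bound by the minimum principle so that the equation is uniformly parabolic on the range of the solution. Second, Aubin--Lions does not directly give strong compactness of $\rho_\eta$, because the spatial estimate you have is on $\na P_\eta(\rho_\eta)$, not on $\na\rho_\eta$; the standard fix is to combine time-equicontinuity of $\rho_\eta$ in $H^{-1}$ with space-compactness of $P_\eta(\rho_\eta)$ and the strict monotonicity of $P$ on $[0,\infty)$ (a Minty-type argument) to deduce a.e.\ convergence of $\rho_\eta$. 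With those adjustments the argument goes through; what the JKO route would buy instead is the energy inequality \eqref{eq:diss} essentially for free from the scheme, at the cost of more work to identify the limiting velocity in \eqref{eq:weak12}.
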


Equality \eqref{eq:weak11} is the usual weak formulation for the continuity equation $\pa_t \rho + \div (\rho v)=0$ with Neumann boundary conditions and initial condition $\rho_{in}$. Equation \eqref{eq:weak12} is equivalent to the equality $\rho v = - \eps^{-1} \rho\na f'(\rho)+ \eps^{-1}\beta \rho \na \phi  $.

Since $\rho\in L^\infty(0,T; L^\infty(\Omega))$, 
Inequality \eqref{eq:diss}, implies 
that  $\rho v \in L^2(0,T; L^{2}(\Omega))$ and so the continuity equation gives $\rho \in C^{1/2}(0,T; H^{-1}(\Omega))$. In particular, it make sense to have the (pointwise in time) function $\rho(\cdot,t)$ in \eqref{eq:phiweak}.

\medskip

Some version of this result is proved for example in \cite{sugiyama} (when $\Omega=\R^d$ and $f'(\rho) = \rho^{m-1}$ with $m>2-\frac 2 d$) and \cite{CC06,K05} (when $\Omega$ subset of $\R^2$).
Weak solutions can also be constructed using a JKO scheme much as in \cite{KMW1} (where a similar system, but with the density constraint $\rho\leq1$, is considered) or \cite{Laux1} (where this gradient flow approach is developed for the degenerate Cahn-Hilliard  model, which is closely related to our model).
We also point out that Theorem 4 in  \cite{Carrillo-Santambrogio} shows that when $f''>0$ and $\rho_{in}\in L^\infty(\Omega)$ (with $\Omega$ convex bounded domain) our equation (with Dirichlet boundary condition for $\phi$) has a weak bounded solution for some time $T = (\beta \| \rho_{in}\|_{L^\infty} )^{-1}$.
Finally, we note that uniqueness of such weak solutions is proved in
 \cite{CaLiMa14}.

Importantly, we point out that all the $L^\infty$ bounds mentioned above are not expected to be uniform in $\eps$ and are therefore not useful for the purpose of this paper.
Only the natural bound in $L^\infty(0,\infty;L^1(\Omega))$ and the bound in $L^\infty(0,\infty;L^2(\Omega))$ provided by the energy inequality under assumption {\bf (H3)} will be used in the proof of Theorem \ref{thm:conv1} below.

\medskip

\subsection{Convergence results}
Our second main result is the following convergence theorem:
\begin{theorem}\label{thm:conv1}
Assume $f$ satisfies {\em ({\bf H1}), ({\bf H2})} and {\em ({\bf H3})}.
Given a sequence $\eps_n$ and initial data $\rho_{in}^{\eps_n}$ such that $\J_{\eps_n}(\rho_{in}^{\eps_n})\leq M$ and $\rho_{in}^{\eps_n}\to \rho_{in} = \rho_c \chi_{E_{in}} \in BV(\Omega;\{0,\rho_c\})$,  let $\rho^{\eps_n}(x,t)$ be the weak solution 
of \eqref{eq:weak}-\eqref{eq:phi0} (given by Theorem \ref{thm:existence}). Then the  followings hold:
\item[(i)] Along a subsequence, the density $\rho^{\eps_n}(x,t)$  converges strongly in $L^\infty((0,T);L^1(\Omega))$ to 
$$\rho(x,t) = \rho_c \chi_{E(t)} \in L^\infty((0,\infty);\BV(\Omega;\{0,\rho_c\})).$$ 
\item[(ii)]  There is a velocity function $v(x,t)$ such that $ t\mapsto \chi_{E(t)}$ satisfies the continuity equation 
\begin{equation}\label{eq:contweakHS}
\begin{cases}
\pa_t \chi_{E(t)} + \div(\chi_{E(t)}v) = 0 & \mbox{ in } \Omega \\
\chi_E v \cdot n =0 &  \mbox{ on } \pa\Omega \\
\chi_{E(0)}= \chi_{E_{in}} & \mbox{ in } \Omega 
\end{cases}
\end{equation}
 as well as the 
energy dissipation property
\begin{equation}\label{eq:energy2}
\J_0(\rho(t)) + \int_0^t \int_\Omega |v|^2 \rho \, dx\, dt \leq \liminf \J_{\eps_n}(\rho_{in}^{\eps_n}).
\end{equation}
\item[(iii)] The pressure defined by
$$p^{\eps_n}:= \frac 1 {\eps_n} \left[ \rho^{\eps_n} (f'(\rho^{\eps_n}) - f'(\rho_c) ) + \frac \beta \sigma \rho^{\eps_n} (\rho_c -\sigma \phi^{\eps_n})\right]$$ 
converges to a function $ p(x,t)$ weakly-$*$ in $L^2((0,T);(C^s (\Omega))^*)$ for any $s>0$.
\item[(iv)] If the following energy convergence assumption holds:
\begin{equation}\label{eq:EA}
\lim_{n\to\infty} \int_0^T \J_{\eps_n}( \rho^{\eps_n} (t)) \, dt = \int_0^T \J_0(\rho(t))\, dt
\end{equation}
then we have
\begin{equation}\label{eq:weakp}
\int_0^\infty \int_\Omega \rho v\cdot \xi -p \, \div\xi\, dx \, dt= -\gamma  \int_\Omega \left[\div \xi - \nu\otimes\nu:D\xi\right] |\na \rho|
\end{equation}
for any vector field  $\xi \in C^\infty_c((0,\infty)\times\overline \Omega ; \R^d)$ such that $\xi \cdot n=0$ on $\pa\Omega$.
In other words, $E(t)$  is a weak solution of the Hele-Shaw free boundary problem with surface tension \eqref{eq:HS}-\eqref{eq:CA} with $\gamma_0 = \gamma \rho_c$ and initial condition $E_{in}$.
\end{theorem}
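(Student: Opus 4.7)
\emph{Compactness and identification of $\rho$, $v$ (parts (i)--(ii)).} The starting point is the energy dissipation \eqref{eq:diss}, which yields $\J_{\eps_n}(\rho^{\eps_n}(t))\leq M$ uniformly in $t$ and a uniform $L^2$ bound on $\sqrt{\rho^{\eps_n}}v^{\eps_n}$ (hence on $\rho^{\eps_n} v^{\eps_n}$ once Assumption (H3) gives an $L^2$ bound on $\rho^{\eps_n}$ via the quadratic lower bound \eqref{eq:Wqg}). Spatial compactness of $t\mapsto \rho^{\eps_n}(\cdot,t)$ in $L^1(\Omega)$ follows from a Modica-type argument applied to \eqref{eq:JW}: the middle term of $\J_\eps$ controls $\|\rho^\eps-\sigma\phi^\eps\|_{L^2}$ and the resulting strong compactness combined with the liminf of Theorem \ref{thm:Gamma} forces cluster points to lie in $\BV(\Omega;\{0,\rho_c\})$. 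Time regularity comes from the continuity equation $\pa_t\rho^{\eps_n}=-\div(\rho^{\eps_n}v^{\eps_n})$ which, with $\rho^{\eps_n} v^{\eps_n}$ uniformly bounded in $L^2_{t,x}$, produces Hölder time estimates in a negative Sobolev norm. An Aubin--Lions-type argument then yields the strong convergence $\rho^{\eps_n}\to\rho=\rho_c\chi_{E(t)}$ in $L^\infty(0,T;L^1)$. The weak $L^2$ limit of $\rho^{\eps_n}v^{\eps_n}$ is identified as $\rho v$ for some $v$ with $\rho|v|^2\in L^1$ (Ambrosio's superposition), the continuity equation \eqref{eq:contweakHS} passes to the limit, and \eqref{eq:energy2} follows from lower semicontinuity of $\int\rho|v|^2$ combined with Theorem \ref{thm:Gamma}(i).

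\emph{Pressure bound (part (iii)).} A direct algebraic manipulation, using $h'(\rho_c)=0$ (which gives $\rho_c f'(\rho_c)-f(\rho_c)=\beta\rho_c^2/(2\sigma)$) and $a=\beta\rho_c/\sigma-f'(\rho_c)$, yields the key identity
\[
\eps p^\eps \;=\; P_f(\rho^\eps)+W(\rho^\eps)+\tfrac{\beta}{2\sigma}(\rho^\eps-\sigma\phi^\eps)^2 \;-\;\tfrac{\beta\sigma}{2}(\phi^\eps)^2,
\]
where $P_f(\rho):=\rho f'(\rho)-f(\rho)$. The middle two terms are controlled by $\eps\J_\eps$, while the outer combination $P_f(\rho^\eps)-\tfrac{\beta\sigma}{2}(\phi^\eps)^2$ vanishes at both wells $\rho=0$ and $\rho=\rho_c$ (thanks to $h'(\rho_c)=0$) and is bounded using (H1)--(H3) together with the $L^2$ bound on $\rho^\eps$. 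This gives an $L^\infty(0,T;L^1(\Omega))$ bound on $p^\eps$. To upgrade to the stated weak-$*$ convergence in $L^2(0,T;(C^s)^*)$, test the flux formulation \eqref{eq:weak12} with $\xi=\na\zeta$ and use the $L^2$ bound on $\rho^\eps v^\eps$ to get a uniform control of $\na p^\eps$ in $L^2(0,T;H^{-1})$; interpolation with the $L^1$ bound finishes the job.

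\emph{Passage to the limit in the flux equation (part (iv)).} The backbone is the rewriting, obtained by integration by parts and using $\rho^\eps-\sigma\phi^\eps=-\eps^2\Delta\phi^\eps$ plus the tensorial identity $\Delta\phi\,\na\phi=\div(\na\phi\otimes\na\phi)-\tfrac12\na|\na\phi|^2$:
\begin{align*}
\int_0^\infty\!\!\int_\Omega \rho^\eps v^\eps\cdot\xi\,dx\,dt
 =& \int_0^\infty\!\!\int_\Omega p^\eps\div\xi\,dx\,dt \\
 & -\int_0^\infty\!\!\int_\Omega\Bigl[\tfrac{1}{\eps}\bigl(W(\rho^\eps)+\tfrac{\beta}{2\sigma}(\rho^\eps-\sigma\phi^\eps)^2\bigr)+\tfrac{\eps\beta}{2}|\na\phi^\eps|^2\Bigr]\div\xi\,dx\,dt \\
 & +\int_0^\infty\!\!\int_\Omega \eps\beta\,(\na\phi^\eps\otimes\na\phi^\eps):D\xi\,dx\,dt.
\end{align*}
Under the energy convergence assumption \eqref{eq:EA}, a Luckhaus--Modica--Sternberg style argument identifies the limits of the three remaining ``surface'' terms. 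Specifically, the minimizer in the inner problem defining $g(s)$ in \eqref{eq:sigma} gives an optimal one-dimensional transition profile; composing $\phi^\eps$ (or $\rho^\eps$) with the associated primitive of $\sqrt{2\beta g}$ produces a sequence of functions whose BV norms are controlled by $\J_\eps$ and whose limit is $\gamma\rho_c\chi_E$. Energy convergence then forces equipartition between the potential/misfit density and $\tfrac{\eps\beta}{2}|\na\phi^\eps|^2$ (each converging, as measures on $\Omega$ for a.e. $t$, to $\tfrac{\gamma\rho_c}{2}|\na\chi_{E(t)}|$) together with the alignment $\na\phi^\eps\parallel\nu$ at the interface, yielding $\eps\beta\,\na\phi^\eps\otimes\na\phi^\eps\rightharpoonup\gamma\rho_c\,\nu\otimes\nu\,|\na\chi_{E(t)}|$ as tensor-valued measures. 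Combining these three limits, the surface part converges to $-\gamma\int[\div\xi-\nu\otimes\nu:D\xi]|\na\rho|$, which together with the weak-$*$ convergence of $p^\eps$ gives exactly \eqref{eq:weakp}.

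\emph{Main obstacle.} The hardest step, by a clear margin, is the equipartition/alignment limit in the paragraph above. Unlike classical Modica--Mortola or the Cahn--Hilliard model of \cite{Laux1}, the ``potential'' side of \eqref{eq:JW} is written in $\rho^\eps$ but the ``gradient'' side is written in $\phi^\eps$, coupled only through the quadratic mismatch $(\rho^\eps-\sigma\phi^\eps)^2$. Extracting the tensor structure of the limit of $\eps\na\phi^\eps\otimes\na\phi^\eps$ therefore requires either replacing $\phi^\eps$ by $\rho^\eps/\sigma$ up to a quantitative $L^2$ error (which costs $O(\eps^{1/2})$ and must be absorbed) or building a Modica-type chain rule directly from the pair $(\rho^\eps,\phi^\eps)$ via the optimal profile associated to $g(s)$. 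All other steps -- compactness, pressure bounds, and passage to the limit in the ``regular'' parts of the flux equation -- are comparatively routine adaptations of the arguments in \cite{KMW2,Laux1}.
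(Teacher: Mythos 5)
Your overall architecture for parts (i), (ii) and (iv) matches the paper's: compactness from the energy dissipation combined with the chain-rule bound $\int_\Omega|\na F(\phi^\eps)|\leq \J_\eps(\rho^\eps)$ for $F'=\sqrt{2\beta g}$ and an Aubin--Lions argument; identification of $j=\rho v$ by lower semicontinuity of the Benamou--Brenier functional; and, for (iv), the rewriting of the first variation into a $\bigl[\tfrac1\eps\bigl(W(\rho^\eps)+\tfrac{\beta}{2\sigma}(\rho^\eps-\sigma\phi^\eps)^2\bigr)+\tfrac{\eps\beta}2|\na\phi^\eps|^2\bigr]\div\xi$ part plus an $\eps\beta\,\na\phi^\eps\otimes\na\phi^\eps:D\xi$ part, followed by equipartition and Reshetnyak continuity under the energy convergence assumption. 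This is precisely Lemma \ref{lem:first} and Proposition \ref{prop:firstvar} in the paper, including your correct identification of the main difficulty (potential in $\rho^\eps$, Dirichlet term in $\phi^\eps$, coupled only through the mismatch).

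The genuine gap is in part (iii). Your identity $\eps p^\eps=P_f(\rho^\eps)+W(\rho^\eps)+\tfrac{\beta}{2\sigma}(\rho^\eps-\sigma\phi^\eps)^2-\tfrac{\beta\sigma}2(\phi^\eps)^2$ with $P_f(\rho):=\rho f'(\rho)-f(\rho)$ is correct, and the two middle terms are $O(\eps)$ in $L^1$ by the energy bound. But to conclude that $p^\eps$ is bounded in $L^\infty(0,T;L^1(\Omega))$ you would need $\|P_f(\rho^\eps)-\tfrac{\beta\sigma}{2}(\phi^\eps)^2\|_{L^1}=O(\eps)$, and ``vanishes at both wells'' is only qualitative. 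Quantitatively, $P_f(\rho)-\tfrac{\beta}{2\sigma}\rho^2$ vanishes only to first order at $\rho=\rho_c$ (its derivative there is $\rho_c\bigl(f''(\rho_c)-\tfrac{\beta}{\sigma}\bigr)$, nonzero for the power law with $m\neq 2$) while $W$ vanishes to second order, so the energy only yields $|P_f(\rho^\eps)-\tfrac{\beta}{2\sigma}(\rho^\eps)^2|\lesssim\sqrt{W(\rho^\eps)}$, i.e.\ an $O(\eps^{1/2})$ bound in $L^1$; similarly, trading $(\rho^\eps)^2$ for $\sigma^2(\phi^\eps)^2$ costs $\|\rho^\eps-\sigma\phi^\eps\|_{L^2}\|\rho^\eps+\sigma\phi^\eps\|_{L^2}=O(\eps^{1/2})$. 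After division by $\eps$ these diverge like $\eps^{-1/2}$. The paper explicitly remarks that a direct Lebesgue-space bound on $p^{\eps_n}$ ``does not seem realistic,'' since it would encode regularity of the limiting curvature. The actual argument is weaker and different: one only estimates the distributional gradient, writing $\int_\Omega p^{\eps_n}\div\xi$ as $\int_\Omega j^{\eps_n}\cdot\xi$ plus the first-variation term, the latter bounded by $C\|D\xi\|_{L^\infty}\J_{\eps_n}(\rho^{\eps_n})$ via Lemma \ref{lem:first}; this gives $\na p^{\eps_n}$ bounded in $L^2((0,T);(C^1(\Omega))^*)$ and, after normalizing $p^{\eps_n}$ to zero spatial mean, $p^{\eps_n}$ bounded in $L^2((0,T);(C^s(\Omega))^*)$. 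Your proposed $L^2(0,T;H^{-1})$ bound on $\na p^\eps$ overshoots for the same reason: the first-variation term converges to a surface measure, so it is controlled only in duality with $C^1$ (or H\"older) test fields, not with $H^1$ ones.
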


The notion of weak solution of \eqref{eq:HS} that we recover with his theorem is similar to the definition considered for example in \cite{JKM,KMW2,Laux1} and calls for a few classical  comments: 
\begin{enumerate}
\item The continuity equation \eqref{eq:contweakHS} encodes the incompressibility condition $\div v=0$ in $E(t)$ and the free boundary condition $V=-v\cdot \nu$.
\item In \eqref{eq:weakp}, $\nu = \frac{\na \rho}{|\na \rho|}$ stands for the $L^\infty$ density of $\na \rho$ with respect to the total variation $|\na \rho|$ (which exists by Radon-Nikodym's differentiation theorem).
In particular, the term 
$\left( \nu\otimes \nu :D\xi\right) |\na \rho|$ is of the form
$ f(x,\lambda /|\lambda |)d|\lambda|$ with $f$ continuous and $1$-homogeneous and $\lambda =\na \rho$. The integral in  \eqref{eq:weakp} thus makes sense (see for example \cite{serrin}).
Since $\rho=\chi_{E(t)} \in \mathrm{BV}$, we can also write
$$\int_0^\infty \int_\Omega  \rho v\cdot \xi -p \, \div\xi\, dx \, dt= -\gamma_0\int_0^\infty \int_{\pa^* E(t)}  \left[\div \xi - \nu\otimes\nu:D\xi\right] d\H^{d-1} dt$$
where $\pa^*E(t)$ denotes the reduced boundary of $E(t)$. In that case $\nu:\pa^* E(t) \to \mathbb S^{d-1}$  is the measure theoretic unit normal vector.
\item By taking test functions $\xi$ supported in either $E(t)$  or $\Omega\setminus E(t)$, we see that
 \eqref{eq:weakp} implies $\na p=0$ in $\{\rho(t)=0\}$ and $v = -\na p$ in $E(t)$. 
 For general test functions  $\xi$, and taking into account the right hand side of  \eqref{eq:weakp} we further get 
 the surface tension condition $[p]  =  \gamma_0 \kappa $ on $\pa E(t)$ and the normal contact angle condition.
  This can be seen by using the classical formula (for a smooth interface $\Sigma$):
\begin{equation}\label{eq:mc}
\int_\Sigma \div \xi - \nu\otimes\nu : D\xi  d\H^{d-1}  = \int_\Sigma \kappa\, \xi\cdot\nu d\H^{d-1} + \int_\Gamma \tilde n \cdot \xi d\H^{d-2} 
\end{equation}
where $\nu$ is the normal vector to $\Sigma$,  $\kappa$ denotes the mean curvature of $\Sigma$ and $\tilde n$ is the conormal vector along $\Gamma = \pa \Sigma$ (orthogonal to $\pa (\Sigma)$ and tangential to $\Sigma$.
This last term must vanish in \eqref{eq:weakp}, which gives the condition  $\tilde n = n$ which means that $\pa E$ and $\pa \Omega$ must meet orthogonally.

 \item We could remove the pressure $p$ from this definition by taking divergence free vector field in \eqref{eq:weakp}. This approach actually makes sense from a variational point of view since such vector fields are the natural directions for the first order variations of the energy which preserve the mass. We choose this definition to be closer to the formulation \eqref{eq:HS}, and so that one recognizes, in the right hand side of   \eqref{eq:weakp}, the classical weak formulation of the mean-curvature operator.
\end{enumerate}

\subsection{Setting the constants to $1$}
While it was important to state the results with the constants $\sigma$ and $\beta$, it will be less cumbersome to make them equal to $1$ in the proofs.
For that, we note that if we introduce
$$
\bar \phi = \sigma \phi, \quad \bar \eps = \frac\eps{\sqrt{\sigma}} , \quad \bar f'(\rho) = \frac \sigma \beta f'(\rho)
$$  
and rescale the time variable by a factor $\frac{\sigma^{3/2}}{\beta}$, we are led to the system
$$
\begin{cases}
\pa_{\bar t} \rho - \div(\rho \na \bar f'(\rho)) + \div(\rho \na\bar \phi)=0 ,\\
-\bar  \eps^2  \Delta \bar \phi  =  \rho - \bar \phi  \end{cases}
$$
We can thus prove the result when $\sigma=\beta=1$ and recover the mains results from there.

\medskip

\medskip

\medskip

\section{$\Gamma$-convergence: Proof of Theorem \ref{thm:Gamma}}\label{sec:Gamma}
\subsection{Preliminaries}
When $\sigma=\beta=1$  the energy functional is given by
 (see \eqref{eq:JW}) 
 \begin{equation}\label{eq:JW1}
\J_\eps(\rho) = \frac 1 \eps \int_\Omega W(\rho) +  \frac1  2 (\rho-\phi^\eps)^2\, dx +\eps \int_\Omega \frac1  2 |\na \phi^\eps|^2\, dx, \quad  
 W(\rho) =   f(\rho) + a\rho - \frac{1}{2 } \rho^2 
\end{equation}
where $\rho\in L^2(\Omega)$ satisfies  $\rho\geq 0$ and $\int_\Omega \rho\, dx =1$, and
 $\phi^\eps$ solves  \eqref{eq:phi0} with $\sigma=1$, that is
\begin{equation}\label{eq:phi1}
\begin{cases}
 \phi^\eps -\eps^2\Delta  \phi^\eps = \rho  &  \mbox{ in } \Omega\\
 \nabla \phi^\eps \cdot n = 0 & \mbox{ on } \pa\Omega.
\end{cases}
\end{equation}

\medskip

An important role in the proof of Theorem \ref{thm:Gamma} is played by the function
\begin{equation}\label{eq:g}
 g(s)  = \inf_{\rho\geq 0}  \left[ W(\rho) +   \frac 1 2 (\rho- s)^2\right]
\end{equation}
(which is defined for all $s\in \R$). 
In particular, we clearly have 
\begin{equation}\label{eq:lowerbound}
\J_\eps(\rho) \geq  \frac 1 \eps \int_\Omega g(\phi^\eps)  \, dx  +\eps \int_\Omega \frac1  2 |\na \phi^\eps|^2\, dx \qquad \mbox{ where $\phi^\eps$ solves \eqref{eq:phi0}}.
\end{equation}
Our assumptions on $W$, {\bf (H1)}, {\bf (H2)} readily imply the following lemma:
\begin{lemma}\label{lem:g1}
The function $g:\R\to \R$ defined by \eqref{eq:g} satisfies
\begin{equation}\label{eq:gpos} g(s) \geq 0 , \quad \forall s\in \R, \qquad   g^{-1}(0) =  \{0,\rho_c\}
\end{equation}
and
\begin{equation}\label{eq:gmin}
 g(s)\leq \frac 1 2   s^2,  \quad g(s) \leq \frac 1 2 (s-\rho_c)^2.
\end{equation}
\end{lemma}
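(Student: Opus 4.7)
The statement follows quickly from the defining formula for $g$ together with the basic properties of $W$ recorded in \eqref{eq:W0}, so the plan is really just to package four short arguments.

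My plan is to handle the upper bounds \eqref{eq:gmin} first, since they follow by a single well-chosen test value in the infimum. Evaluating the bracket in \eqref{eq:g} at $\rho=0$ gives $W(0)+\tfrac12 s^2 = \tfrac12 s^2$ because $W(0)=0$ (by \eqref{eq:W0}), yielding the first bound. Evaluating instead at $\rho=\rho_c$ gives $W(\rho_c)+\tfrac12(\rho_c-s)^2 = \tfrac12(\rho_c-s)^2$, again using $W(\rho_c)=0$, which is the second bound.

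For \eqref{eq:gpos}, the inequality $g(s)\geq0$ is immediate: both $W(\rho)\geq 0$ (from \eqref{eq:W0}) and $\tfrac12(\rho-s)^2\geq 0$, so every value in the infimum defining $g$ is nonnegative. The inclusion $\{0,\rho_c\}\subset g^{-1}(0)$ follows from \eqref{eq:gmin} by taking $s=0$ and $s=\rho_c$ respectively. For the reverse inclusion, I would first note that the infimum in \eqref{eq:g} is actually attained: the map $\rho\mapsto W(\rho)+\tfrac12(\rho-s)^2$ is continuous on $[0,\infty)$ and bounded below by $\tfrac12(\rho-s)^2\to\infty$ as $\rho\to\infty$, so a standard compactness argument on $[0,R]$ for $R$ large produces a minimizer $\rho^*\geq 0$. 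If $g(s)=0$, then both nonnegative summands must vanish at $\rho^*$, giving $W(\rho^*)=0$ and $s=\rho^*$; the first forces $\rho^*\in\{0,\rho_c\}$ by \eqref{eq:W0}, hence $s\in\{0,\rho_c\}$.

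There is no real obstacle here; the only point that needs a line of justification rather than a one-line substitution is the attainment of the infimum in the converse direction of \eqref{eq:gpos}, and even that follows from the coercivity built into the $(\rho-s)^2$ term together with the nonnegativity of $W$.
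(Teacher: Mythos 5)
Your proof is correct and follows essentially the same route as the paper: the upper bounds come from testing the infimum at $\rho=0$ and $\rho=\rho_c$, and the sign and zero-set of $g$ come from the corresponding properties of $W$ in \eqref{eq:W0}. The only difference is that you spell out the attainment of the infimum (via coercivity of the quadratic term) to justify $g^{-1}(0)\subset\{0,\rho_c\}$, a detail the paper leaves implicit; this is a welcome addition rather than a deviation.
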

\begin{proof}
The fact that $W(\rho) \geq 0$, and $W^{-1}(0)=  \{0,\rho_c\}$ yields \eqref{eq:gpos}.
Furthermore, we have $g(s) \leq W(\rho) + \frac 1 2 (\rho-s)^2$ for all $\rho\geq 0$ and we get \eqref{eq:gmin} by taking $\rho=0$ and $\rho=\rho_c$ in this inequality. 
\end{proof}

Lemma \ref{lem:g1}  shows in particular that $g$ is a double-well potential.  The right-hand side of \eqref{eq:lowerbound} is thus the classical Modica-Mortola functional  
\begin{equation}\label{eq:F}
\F_\eps(\psi ) 
=
\begin{cases}
\displaystyle    \frac 1 \eps \int_\Omega g(\psi)  \, dx  +\eps \int_\Omega \frac1  2 |\na \psi|^2\, dx & \mbox{ if } \psi \in H^1(\Omega), \mbox{ and } \int_\Omega \psi\, dx =1 \\
\infty & \mbox{ otherwise}. 
 \end{cases}
\end{equation}
The $\Gamma$-convergence of $\F_\eps $ toward  the perimeter functional has been known since the work on Modica and Mortola \cite{MM77},  Modica \cite{M87} and Sternberg \cite{S88} when $g(s)\sim |s|^p$ for large $|s|$, $p\geq2$. The same result with weaker assumptions on the double-well potential ($g(s) \geq C|s|$ for large $|s|$) was proved by Fonseca and Tartar \cite{FT89}.
The proof  of the liminf property of Theorem \ref{thm:Gamma}, which is presented in Section \ref{sec:liminf}, relies on these results and inequality~\eqref{eq:lowerbound}, which gives:
\begin{equation}\label{eq:JF}
\J_\eps(\rho) \geq \F_\eps(\phi^\eps)\qquad \mbox{ where $\phi^\eps$ solves \eqref{eq:phi0}}.
\end{equation}
\medskip
 
The proof of the limsup property, given in Section \ref{sec:limsup} is more delicate and will require the introduction of the functional 
$$ \G_\eps(\eta,\psi) =  \frac 1 \eps \int_\Omega W(\eta) + \frac1  2 (\eta-\psi)^2 +\eps \int_\Omega \frac1  2 |\na \psi|^2\, dx$$
for all functions $\eta\in L^2(\Omega) $ and $\psi\in H^1(\Omega)$. This is similar to $\J_\eps(\eta)$, but we no longer assume that the potential $\psi$ and the density $\eta$ are related via  \eqref{eq:phi1}.
The key observation is that the minimizer of $\psi\mapsto \G_\eps(\eta,\psi) $ for a fixed $\eta$, naturally solves     the elliptic equation with Neumann boundary conditions  \eqref{eq:phi1} so  that we can recover $\J_\eps(\eta)$. 
More precisely, we will need the following proposition which we prove below:
\begin{proposition}\label{prop:GJ}
The following equalities hold:
\begin{equation}\label{eq:minF} 
\min_{\eta\in L^2(\Omega), \eta\geq 0} \G_\eps(\eta,\phi) = \F_\eps(\phi)\qquad \mbox{ for all  $\phi \in H^1(\Omega)$ with $\int_\Omega \phi \, dx =1$}.
\end{equation}
and 
\begin{equation}\label{eq:minJ} 
 \min_{\psi\in H^1(\Omega)}  \G_\eps(\rho,\psi) = \J_\eps(\rho) \qquad\mbox{ for all $\rho\in L^2(\Omega)$, with $\rho\geq 0$ and  $\int_\Omega \rho\, dx=1$.} 
 \end{equation}
\end{proposition}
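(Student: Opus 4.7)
The plan is to prove the two equalities separately by exploiting the fact that $\G_\eps(\eta,\psi)$ decouples cleanly when one of its arguments is held fixed: with $\eta$ fixed it becomes a standard quadratic functional in $\psi$, and with $\psi$ fixed the integrand decouples completely across $x$ and can be minimized pointwise.

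For \eqref{eq:minJ}, I would observe that the term $\frac{1}{\eps}\int_\Omega W(\rho)\, dx$ is independent of $\psi$, so the minimization reduces to the strictly convex, coercive quadratic functional $\psi \mapsto \frac{1}{2\eps}\int_\Omega(\rho-\psi)^2\, dx + \frac{\eps}{2}\int_\Omega |\na\psi|^2\, dx$ on $H^1(\Omega)$. A unique minimizer exists by the direct method, characterized by the Euler-Lagrange equation
\[
\int_\Omega (\psi-\rho)\, v + \eps^2 \na\psi\cdot\na v\, dx = 0 \qquad \forall v\in H^1(\Omega),
\]
which is exactly the weak form of \eqref{eq:phi1}. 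Hence the minimizer is $\phi^\eps$, and $\G_\eps(\rho,\phi^\eps) = \J_\eps(\rho)$ by direct comparison with \eqref{eq:JW1}.

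For \eqref{eq:minF}, the gradient term $\frac{\eps}{2}\int_\Omega |\na\phi|^2\, dx$ is independent of $\eta$, and the pointwise inequality built into the definition of $g$, namely $W(\eta(x)) + \frac{1}{2}(\eta(x)-\phi(x))^2 \geq g(\phi(x))$, integrates to give $\G_\eps(\eta,\phi) \geq \F_\eps(\phi)$ for all admissible $\eta$. For the matching upper bound I would select, for each $s\in\R$, a pointwise minimizer $\eta_*(s)\in[0,\infty)$ of $\rho\mapsto W(\rho) + \frac{1}{2}(\rho-s)^2$; this minimum is attained since $W\geq 0$ is continuous on $[0,\infty)$ and the quadratic term is coercive. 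Setting $\eta(x) := \eta_*(\phi(x))$ then gives a candidate, and the bound $g(s)\leq s^2/2$ from \eqref{eq:gmin} forces $|\eta_*(s)-s|\leq |s|$, hence $0\leq\eta_*(s)\leq 2|s|$, so $\eta\in L^2(\Omega)$ since $\phi\in H^1(\Omega)\subset L^2(\Omega)$. By construction $\G_\eps(\eta,\phi) = \F_\eps(\phi)$, completing the proof.

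The one technical step that is not completely automatic is ensuring that $\eta_*$ can be chosen to be Borel measurable. Since $W$ is a genuine double-well potential and therefore not convex, the pointwise minimizer need not be unique, so a continuous selection is not immediate. I would invoke a standard measurable selection theorem (Kuratowski-Ryll-Nardzewski) applied to the closed-valued multifunction $s\mapsto\{\rho\geq 0 : W(\rho)+\frac{1}{2}(\rho-s)^2 = g(s)\}$, whose upper hemicontinuity follows from the uniform coercivity. Alternatively, one can make the choice explicit by picking the smallest minimizer at each $s$ and verifying Borel measurability directly using the continuity of $W$. This is the only delicate point in an otherwise entirely elementary calculation.
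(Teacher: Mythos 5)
Your proof is correct, and for \eqref{eq:minJ} it is identical to the paper's: isolate the $W$ term, minimize the strictly convex quadratic functional in $\psi$, and identify the Euler--Lagrange equation with the weak form \eqref{eq:phiweak} of \eqref{eq:phi1}.

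For \eqref{eq:minF} your skeleton (lower bound from the definition of $g$, upper bound by substituting a pointwise minimizer) is also the paper's, but the one step you flag as delicate is in fact not delicate at all, and the paper's treatment shows why. The point you miss is a cancellation: since $W(\rho)=f(\rho)+a\rho-\tfrac12\rho^2$, the pointwise objective is
$$W(\rho)+\tfrac12(\rho-s)^2=f(\rho)-(s-a)\rho+\tfrac12 s^2,$$
so the non-convex $-\tfrac12\rho^2$ in $W$ is exactly absorbed by the quadratic penalty and the minimization in $\rho$ is a \emph{strictly convex} problem (by {\bf (H1)}, $f''>0$ on $(0,\infty)$). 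Consequently the minimizer is unique and given explicitly by the Legendre-transform formula $\eta_*(s)={f^*}'(s-a)=s-g'(s)$ (Lemma \ref{lem:g2}), which is a continuous nondecreasing function of $s$; measurability of $\eta_*(\phi(x))$ is then automatic, and the $L^2$ bound and nonnegativity follow from \eqref{eq:*'} and the linear growth bound \eqref{eq:f*'} rather than from your estimate $0\le\eta_*(s)\le 2|s|$ (which is also fine). So your appeal to Kuratowski--Ryll-Nardzewski, and the preceding claim that the minimizer ``need not be unique'' because $W$ is a double-well, are both unnecessary and the latter is actually incorrect for this particular functional; the argument you give still closes the proof, but the explicit convex-duality route is both simpler and is reused elsewhere in the paper (the identity $\rho_\phi=\phi-g'(\phi)$ is precisely what drives the construction of the recovery sequence in the proof of Theorem \ref{thm:Gamma}(ii)).
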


In order to prove this proposition, we need to establish a few additional properties of the function $\rho\mapsto g(\rho)$ defined by \eqref{eq:g}:
\begin{lemma}\label{lem:g2}
We have $g\in C^1(\R)\cap C^2_{loc}(\R\setminus\{a\})$ and
\begin{equation}\label{eq:gnu}
g(s)= \frac 1 2 s^2 - f^*(s-a)
\end{equation}
where $f^*$ is the Legendre transform of the convex function $f$ (which we extend to $\R$ by setting $f(\rho)=+\infty$ for $\rho<0$).
Furthermore, for all $s\geq 0$ we have
\begin{equation}\label{eq:eqg} 
g(s) = W(\rho) + \frac 1 2 (\rho-s)^2\quad \Longleftrightarrow \quad s\in \pa f (\rho) +a \quad \Longleftrightarrow \quad \rho = s-g'(s).
\end{equation} 
\end{lemma}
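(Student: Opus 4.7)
The strategy is to rearrange the objective in \eqref{eq:g} to expose the Legendre transform of $f$, derive the regularity of $g$ from standard properties of Legendre transforms, and then read off the characterization of minimizers via Fenchel--Young duality.

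\emph{Step 1 (identity \eqref{eq:gnu}).} I would first use $W(\rho)=f(\rho)+a\rho-\tfrac12\rho^2$ (see \eqref{eq:W}) to compute
$$W(\rho)+\tfrac12(\rho-s)^2=f(\rho)-(s-a)\rho+\tfrac12 s^2,$$
the crucial cancellation being that of the $\rho^2$ terms. Extending $f$ by $+\infty$ on $\R_-$ as prescribed by the lemma, the constraint $\rho\geq 0$ is absorbed into the extended function, and taking the infimum over $\rho\in\R$ gives
$$g(s)=\tfrac12 s^2-\sup_{\rho\in\R}\bigl[(s-a)\rho-f(\rho)\bigr]=\tfrac12 s^2-f^*(s-a).$$

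\emph{Step 2 (regularity).} Because $f$ is proper, convex, lower semicontinuous with $f(0)=f'(0)=0$ and $f''(\rho)>0$ for $\rho>0$ by (H1), standard convex analysis gives the following picture for $f^*$. For $t\leq 0$ the map $\rho\mapsto t\rho-f(\rho)$ is nonincreasing on $\R_+$ (since $f$ is increasing on $\R_+$), so the supremum is attained at $\rho=0$ and $f^*(t)=0$. For $t>0$, the maximizer is the unique $\rho>0$ with $f'(\rho)=t$, so $(f^*)'(t)=(f')^{-1}(t)$. Since $(f')^{-1}$ is continuous on $[0,\infty)$ with $(f')^{-1}(0)=0$ and is $C^1$ on $(0,\infty)$ (by $f''>0$), we get $f^*\in C^1(\R)\cap C^2_{\mathrm{loc}}(\R\setminus\{0\})$. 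Translating by $a$, this yields $g\in C^1(\R)\cap C^2_{\mathrm{loc}}(\R\setminus\{a\})$ together with the formula $g'(s)=s-(f^*)'(s-a)$.

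\emph{Step 3 (equivalences).} Fix $s\geq 0$. By Step 1, a nonnegative $\rho$ attains $g(s)$ if and only if it minimizes the convex function $\rho\mapsto f(\rho)-(s-a)\rho$ over $\R$ (with $f$ extended by $+\infty$ on $\R_-$). The first-order optimality condition reads $0\in\pa f(\rho)-(s-a)$, i.e. $s-a\in\pa f(\rho)$, equivalently $s\in\pa f(\rho)+a$ — the second characterization. By the Fenchel--Young equality, $s-a\in\pa f(\rho)$ is equivalent to $\rho\in\pa f^*(s-a)$; since $f^*$ is differentiable (Step 2), $\pa f^*(s-a)=\{(f^*)'(s-a)\}$, and Step 2 gives $(f^*)'(s-a)=s-g'(s)$. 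This proves the third characterization $\rho=s-g'(s)$.

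\emph{Main obstacle.} There is no substantive obstacle. The one delicate point is the boundary behavior at $\rho=0$, where $\pa f(0)=(-\infty,0]$ (since $f'(0^+)=0$). This multivaluedness is precisely what produces $f^*\equiv 0$ on $(-\infty,0]$ and is the reason $g$ is only $C^1$, not $C^2$, at $s=a$.
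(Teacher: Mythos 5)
Your proposal is correct and follows essentially the same route as the paper: expand $W(\rho)+\tfrac12(\rho-s)^2$ to cancel the quadratic terms and expose $f^*(s-a)$, deduce the regularity of $g$ from the standard properties of $f^*$ under (H1) (including $f^*\equiv 0$ on $\R_-$ and $(f^*)'=(f')^{-1}$ on $\R_+$), and obtain the equivalences from first-order optimality and Fenchel--Young duality. No gaps.
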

\medskip

\noindent{\bf The Legendre transform:}
We recall that  the Legendre transform of the convex function $f$ is defined by 
$$ f^*(s)= \sup_{\rho\in \R} \left( \rho s - f(\rho)\right) .$$
This function $f^*$ is convex and since $f\geq 0$, $f(0)=0$ and $f(\rho)=+\infty$ for $\rho<0$ we easily check that 
$$ f^*(s)=0 \qquad \mbox{ for all } s\leq 0.$$
Furthermore, ({\bf H2}) implies that $f(\rho)\geq \frac 1 2 \rho^2-a\rho$ for all $\rho\geq 0$, and so  $f^*:\R\to\R$ is continuous and satisfies $f^*(s)\leq  \frac 1 2 (s+a)^2$.
We also have the classical property of the subdifferentials
$$ s \in \pa f (\rho) \Leftrightarrow \rho \in \pa f^*(s)$$
where $\pa f(\rho) = f'(\rho)$ if $\rho>0$ and $\pa f(0) = (-\infty,0]$. 
We thus have
\begin{equation}\label{eq:*'} {f^*}' (s) = 0 \quad\mbox{ for } s\leq 0, \quad {f^*}'(s) = (f')^{-1}(s)>0  \quad\mbox{ for } s> 0 \end{equation}
and ({\bf H1}) now implies that $f^* \in C^1(\R)\cap C^2_{loc}(\R\setminus\{ 0\})$.
Finally, ({\bf H1}) also implies $f'(\rho) \geq \delta \rho -1$ for $\rho\geq \delta^{-1}$ which gives
\begin{equation}\label{eq:f*'}
 {f^*}'(s) \leq  \delta^{-1} (1+s) \qquad\mbox{ for all } s\in \R.
 \end{equation}

\medskip

We now turn to the proof of Proposition  \ref{prop:GJ} (the proof of Lemma \ref{lem:g2} will be presented right after):
\begin{proof}[Proof of Proposition \ref{prop:GJ}]
Given $\phi \in H^1(\Omega)$ with $\int_\Omega \phi \, dx =1$,
\eqref{eq:g}  implies 
$$\G_\eps(\eta,\phi) \geq  \F_\eps(\phi) \qquad \mbox{ for all $\eta\in L^2(\Omega)$, with $\eta\geq 0$}.$$
Furthermore, if we  define the function $\rho_\phi(x)$ by
$\rho_\phi(x): = \phi (x)- g'(\phi(x))$ for all $x\in \Omega$, then \eqref{eq:eqg}  implies
$ g(\phi(x)) = W(\rho_\phi(x)) + \frac 1 2 (\rho_\phi(x)-\phi(x))^2$
and so
$$\G_\eps(\rho_\phi,\phi) =  \F_\eps(\phi).$$
This implies  \eqref{eq:minF} if we can show that $\rho_\phi\in L^2$ and $\rho_\phi\geq 0$:
Using, \eqref{eq:gnu} we  get  the alternative formula  
$ \rho_\phi(x) = {f^*}' (\phi(x)-a)$ which, together with 
 \eqref{eq:*'} and  \eqref{eq:f*'},  implies that $\rho_\phi(x)\geq 0$ and 
 $\rho_\phi\in L^2(\Omega)$.
 Equality \eqref{eq:minF}  thus  follows.
\medskip

In order to prove \eqref{eq:minJ}, we note that for a given $\rho\in L^2(\Omega)$, we have
$$
 \min_{\psi\in H^1(\Omega)}  \G_\eps(\rho,\psi) 
=  \frac 1 \eps \int_\Omega W(\rho) \, dx + \min_{\psi\in H^1(\Omega)} \left(  \frac 1 \eps \int_\Omega  \frac1  2 (\rho-\psi)^2 +\eps \int_\Omega \frac1  2 |\na \psi|^2\, dx\right)
$$
This latest minimization problem is classical and the minimum is reached for the  function $\phi^\eps\in H^1(\Omega)$ which satisfies the weak formulation \eqref{eq:phiweak} of  \eqref{eq:phi1}.
We deduce that
$$ \min_{\psi\in H^1(\Omega)}  \G_\eps(\rho,\psi) = \G_\eps(\rho,\phi^\eps)
$$
and since $\phi^\eps$ is the solution of \eqref{eq:phi1} we have $\G_\eps(\rho,\phi^\eps)=\J_\eps(\rho)$ (when $\rho\geq 0$ and $\int \rho=1$). Equality \eqref{eq:minJ} follows.
\end{proof}
\medskip

\begin{proof}[Proof of Lemma \ref{lem:g2}]
Recalling the fact that  $ W(\rho) =   f(\rho) + a\rho - \frac{1}{2 } \rho^2 $,
we can write:
\begin{align}
g(s) 
 &=  \inf_{\rho\geq 0}  \left[  f(\rho) +a\rho - \frac {1}{ 2}  \rho^2  +  \frac {1}{ 2}  (\rho-s)^2\right] \nonumber \\
 & =  \inf_{\rho\geq 0}  \left[  f(\rho) - \rho ( s-a)   +  \frac {1 }{ 2}   s^2\right] \nonumber \\
& =  \frac {1 }{ 2}   s^2 +  \inf_{\rho\geq 0}   \left(f (\rho) -   \rho (  s-a)\right)  \nonumber \\
& =  \frac {1 }{ 2}  s^2 -\sup_{\rho\in \R}   \left(     \rho (  s-a)-f(\rho)\right) \label{eq:gsup}
\end{align}
(where we recall that $f(\rho)=+\infty$ for $\rho<0$)
which gives \eqref{eq:gnu}.
The properties of the Legendre transform discussed above imply that $g\in C^1(\R) \cap C^2_{loc} (\R\setminus\{a\})$.
\medskip

Given $s\geq0$, we have
$ g(s) = W(\rho) + \frac 1 2 (\rho-s)^2 $ if and only if the infimum in the definition  of $g$, \eqref{eq:g}, is reached for  $\rho$. This infimum is reached for the same $\rho$ as the supremum 
in \eqref{eq:gsup}, that is for all $\rho $ such that $s-a \in \pa f(\rho)$ (recall that $f$ is convex).
This gives the first equivalence.
Furthermore, basic properties of the  Legendre transform imply
$$ 
s-a \in \pa f (\rho)   \Longleftrightarrow  \rho \in \pa f^*(s-a) 
$$
It remains to see that \eqref{eq:gnu} gives $\pa f^*(s-a)  = s-g'(s)$ to get the second equivalence.
\end{proof}

\subsection{The liminf property (Theorem \ref{thm:Gamma}-(i))}\label{sec:liminf}
The proof of the liminf property ((Theorem \ref{thm:Gamma}-(i)) follows from \eqref{eq:JF} and the usual argument for Modica-Mortola's convergence for $\F_\eps$. Since this argument is relatively simple, we recall it here for convenience:
We introduce the function  $F$ such that 
\begin{equation}\label{eq:FGHJ} F'(s)  = 
\sqrt{2g(s)}\, , \quad s\in(0,\rho_c), \quad F(0)=0
\end{equation}
which we extend by constant to $\R_+$ by setting $F(s)= \int_0^{\rho_c} \sqrt{2g(s)}\, ds = \gamma \rho_c$ for $s\geq \rho_c$.
We point out that \eqref{eq:gmin} implies
$$ F'(s)\leq \sqrt{2g(s)} \leq \min\{ s, \rho_c-s\}_+\qquad \forall s\geq 0$$
so that $F$ is a Lipschitz function.

Given $\rho\in L^2(\Omega)$, $\rho\geq 0$ and $\phi^\eps \in H^1$ the solution of \eqref{eq:phi1} (which satisfies in particular $\phi^\eps\geq 0$ by the maximum principle), we have
$$ \int_\Omega |\na F(\phi^\eps)| \leq  \int_\Omega \sqrt{2 g(\phi^\eps)} |\na \phi^\eps| \leq  \frac 1 \eps \int_\Omega g(\phi^\eps)  \, dx  +\eps \int_\Omega \frac1  2 |\na \phi^\eps|^2\, dx.
$$
Together with  \eqref{eq:lowerbound}, this implies
\begin{equation}\label{eq:BVbound} 
\int_\Omega |\na F(\phi^\eps)| 
 \leq 
\J_\eps(\rho).
\end{equation}
We now consider a sequence $\rho^\eps$ which converges to $\rho$ in $L^1$.
If $\liminf_{\eps\to 0} \J_\eps (\rho^\eps)=\infty$, then the result is obviously true, so we assume that $\liminf_{\eps\to 0} \J_\eps (\rho^\eps)<\infty$ and consider a subsequence $\rho^{\eps_n}$ such that $\liminf_{\eps\to 0} \J_\eps (\rho^\eps) = \lim  \J_{\eps_n} (\rho^{\eps_n})$ and $ \J_{\eps_n} (\rho^{\eps_n})\leq C$.
In particular, we have $\rho^{\eps_n}\geq 0$ and $\int_\Omega \rho^{\eps_n}\, dx =1$ and so $\int_\Omega \rho(x)\, dx =1$.
Up to another subsequence, we can further assume that $\rho^{\eps_n}\to\rho$ a.e. in $\Omega$.
Formula \eqref{eq:JW} then gives
$$ \int_\Omega W(\rho^{\eps_n})  \, dx +  \int_\Omega\frac1  2 (\rho^{\eps_n}-\phi^{\eps_n})^2 \leq C {\eps_n}.
 $$
This implies that $\phi^{\eps_n}\to \rho$ in $L^1$ (and a.e. up to yet another subsequence) and
 since $W(\rho^{\eps_n}) \to W(\rho)$ a.e., Fatou's lemma implies that
$W(\rho)=0$ a.e., that is  $\rho = \rho_c \chi_{E}$ for some set $E\subset \Omega$.

Since $\rho\mapsto F(\rho)$ is continuous,  $F(\phi^{\eps_n})$ converges a.e. and in $L^1(\Omega)$  to $F(\rho_c \chi_{E}) = F(\rho_c)\chi_E = \gamma \rho_c\chi_E = \gamma\rho$ (we can use Lebesgue dominated convergence theorem since $F(\phi^{\eps_n}) \leq \gamma \rho_c$ in $\Omega$).
Finally, \eqref{eq:BVbound} and the lower semicontinuity of the BV norm implies 
$$
\liminf  \J_{\eps_n} (\rho^{\eps_n}) \geq
\liminf   \int_\Omega |\na F(\phi^{\eps_n})| 
\geq  \int_\Omega |\na (\gamma \chi_E)| =  \int_\Omega |\na (\gamma \rho)| = \J_0(\rho).
$$
This concludes the proof of the liminf property.

\subsection{The Limsup property (Theorem \ref{thm:Gamma}-(ii))}\label{sec:limsup}
First, we recall the  classical result for the Modica-Mortola functional $\F_\eps$:
\begin{theorem}[\cite{S88,FT89,Leoni}]\label{thm:GF}
Assume that $\Omega$ is an open subset of $\R^d$ with Lipschitz boundary and that $g$ is a continuous double-well potential.
Then for every $\rho\in L^1(\Omega)$ 
there exists a sequence $\psi^\eps \in L^1(\Omega)$ such that $\psi^\eps \to \rho$ in $L^1(\Omega)$ (and a.e.) and 
$$ \limsup_{\eps\to 0^+} \F_{\eps} (\psi^\eps) \leq \J_0(\rho)$$
\end{theorem}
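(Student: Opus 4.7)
The plan is to follow the classical recovery-sequence construction of Modica--Mortola in the form adapted by Sternberg and Fonseca--Tartar. The statement is trivial unless $\J_0(\rho)<\infty$, so I first reduce to the case $\rho=\rho_c\chi_E$ where $E\subset\Omega$ has finite perimeter and $\rho_c|E|=1$ (otherwise set $\psi^\eps=\rho$). By a standard approximation argument (convolution of $\chi_E$ followed by selecting a level set via the coarea formula) one further reduces to the case where $\pa E\cap\Omega$ is $C^2$ and meets $\pa\Omega$ transversally; the perimeter $P(E,\Omega)$ is preserved in the limit, and a diagonal argument then recovers the general case.

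For such a smooth $E$, let $d_E$ denote the signed distance to $\pa E$, negative inside $E$ and positive outside, so $|\na d_E|=1$ in a tubular neighborhood. Let $q:\R\to[0,\rho_c]$ be the unique decreasing solution of the optimal-profile ODE
\begin{equation*}
q'(t)=-\sqrt{2g(q(t))},\qquad q(0)=\rho_c/2,
\end{equation*}
which exists because $g$ vanishes quadratically (at worst) at the two wells, thanks to \eqref{eq:gmin}; one has $q(t)\to\rho_c$ as $t\to-\infty$ and $q(t)\to 0$ as $t\to+\infty$, with exponential rate of convergence to the wells. Define the candidate
\begin{equation*}
\tilde\psi^\eps(x):=q\!\left(\frac{d_E(x)}{\eps}\right),
\end{equation*}
(smoothly cut off to be equal to the well values far from $\pa E$ if the distance function is only defined in a tubular neighborhood).

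The main computation uses the equipartition of energy built into $q$: since $(q')^2=2g(q)$,
\begin{equation*}
\F_\eps(\tilde\psi^\eps)=\int_\Omega\frac{2}{\eps}\,g\!\left(q(d_E/\eps)\right)dx,
\end{equation*}
and the coarea formula applied to $d_E$ gives
\begin{equation*}
\F_\eps(\tilde\psi^\eps)=\int_\R \mathcal{H}^{d-1}\!\left(\{d_E=\eps t\}\cap\Omega\right)\,2g(q(t))\,dt.
\end{equation*}
Smoothness of $\pa E$ and its transverse meeting with $\pa\Omega$ yield $\mathcal{H}^{d-1}(\{d_E=\eps t\}\cap\Omega)\to P(E,\Omega)$ for each fixed $t$, with a uniform bound allowing dominated convergence (the profile $q$ converges exponentially to the wells so $g(q)$ is integrable on $\R$). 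Performing the change of variable $s=q(t)$ with $dt=-ds/\sqrt{2g(s)}$ yields
\begin{equation*}
\lim_{\eps\to 0}\F_\eps(\tilde\psi^\eps)=P(E,\Omega)\int_0^{\rho_c}\sqrt{2g(s)}\,ds=\gamma\rho_c P(E,\Omega)=\J_0(\rho),
\end{equation*}
using the definition \eqref{eq:sigma} of $\gamma$ with $\beta=\sigma=1$. The pointwise convergence $\tilde\psi^\eps\to\rho_c\chi_E=\rho$ and dominated convergence give $\tilde\psi^\eps\to\rho$ in $L^1(\Omega)$.

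The only remaining issue is the mass constraint $\int_\Omega\psi^\eps\,dx=1$ hidden in the definition of $\F_\eps$: by construction one only has $\int_\Omega\tilde\psi^\eps\,dx=1+o(1)$, so I would set $\psi^\eps:=\tilde\psi^\eps+c_\eps$ with $c_\eps=O(\eps)$ chosen to restore unit mass (or, to keep $\psi^\eps\geq 0$, redistribute the defect on a set of measure $\sim 1$ away from the transition layer). Since $g$ is continuous with zero at $0$ and $\rho_c$, and $|\na\psi^\eps|=|\na\tilde\psi^\eps|$, such a correction adds only $o(1)$ to $\F_\eps(\tilde\psi^\eps)$ and preserves the $L^1$ convergence. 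The most delicate step is the reduction to a smooth $\pa E$ meeting $\pa\Omega$ orthogonally (so that the coarea contributions stay controlled near the fixed boundary); this is classical and handled by Sternberg's argument, relying on the Lipschitz regularity of $\pa\Omega$.
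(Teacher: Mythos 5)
Your construction is essentially the classical Modica--Sternberg recovery sequence, which is exactly what the paper relies on: the paper does not prove Theorem \ref{thm:GF} at all, it imports it from \cite{S88,FT89,Leoni}, so your argument is a reconstruction of the cited proof rather than an alternative to anything in the text. The core of it is sound: the optimal profile $q'=-\sqrt{2g(q)}$, equipartition, the coarea formula in the tubular neighborhood, and the identification $\int_0^{\rho_c}\sqrt{2g(s)}\,ds=\gamma\rho_c$ (with $\beta=\sigma=1$) all check out, as does the reduction to smooth $E$ by approximation and a diagonal argument.

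Two caveats. First, your justification of the decay of $q$ misreads \eqref{eq:gmin}: that inequality is an \emph{upper} bound on $g$, so it guarantees $g$ vanishes \emph{at least} quadratically at the wells, and the profile may then converge to the wells only algebraically (or reach them in finite time if $g$ degenerates slower). This is harmless, since you do not actually need any rate: by equipartition, $\int_\R 2g(q(t))\,dt=\int_0^{\rho_c}\sqrt{2g(s)}\,ds<\infty$ automatically, which is all dominated convergence requires. Second, and more substantively, the mass correction $\psi^\eps=\tilde\psi^\eps+c_\eps$ with $c_\eps=O(\eps)$ adds a bulk potential cost $\eps^{-1}|\Omega|\,g(c_\eps)$ (and $\eps^{-1}|\Omega|\,g(\rho_c+c_\eps)$), which is $o(1)$ only if $g$ vanishes superlinearly at the wells. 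Under the theorem's stated hypothesis --- $g$ merely continuous --- this can fail (e.g.\ $g(s)\sim\sqrt{|s|}$ near $0$ gives cost $\sim\eps^{-1/2}$), and spreading the $O(\eps)$ defect over a set of measure $\sim 1$ fails for the same reason. For the paper's specific $g$ this is fine, since Lemma \ref{lem:g2} gives $g\in C^1$ with $g'(0)=g'(\rho_c)=0$, hence $g(c_\eps)=o(\eps)$; in full generality one should instead localize the correction (an $O(1)$-height bump on a set of measure $O(\eps)$, whose potential and gradient costs are both $o(1)$) or, as in \cite{Leoni} and as the paper itself exploits in the proof of Theorem \ref{thm:Gamma}-(ii), use a monotone family $\psi^\eps_t$ and choose $t_\eps$ by the intermediate value theorem to match the mass exactly.
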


Classically, the liminf property for $\F_\eps$ is proved requiring some growth conditions on $W$ at $\pm\infty$, but such conditions are not required for the limsup property. We also recall that
when $0\leq \rho\leq \rho_c$ a.e. in $\Omega$, the classical construction of the sequence $\psi^\eps$ guarantees that
$$0\leq \psi^\eps (x)\leq \rho_c \qquad \mbox{ a.e. in } \Omega.$$
\medskip

\begin{proof}[Proof of Theorem \ref{thm:Gamma}-(ii)]
If $\J_0(\rho)=\infty$, the construction of the recovery sequence is trivial, so we  can  assume that 
$\rho\in BV(\Omega; \{0,\rho_c\})$ and $\int_\Omega \rho(x)\, dx=1$. We can thus write
$$ \rho = \rho_c \chi_E, \quad \mbox{ with } \quad \rho_c \int _\Omega \chi_E(x)\, dx =1.$$
As noted in Remark \ref{rem:sup}, for such a function $\rho$ we have
$$ 
\lim_{\eps\to 0} \J_\eps( \rho_c \chi_E )  =  \frac { \rho_c } {4} \int_\Omega |\na \rho| > \gamma H^{n-1}(\pa^* E ) =\J_0(\rho)$$
so we cannot take $\rho^\eps = \rho$ as a recovery sequence. 
Instead, we use 
Theorem \ref{thm:GF}, which implies the existence 
of a sequence $\psi^\eps\in H^1(\Omega) $ such that $\psi^\eps\to \rho $ in $L^1(\Omega)$ and 
$$ 
\limsup_{\eps\to0^+} \F_{\eps}(\psi^\eps) \leq \J_0(\rho).
$$
This sequence $\psi^\eps$ satisfies in particular the mass constraint $\int_\Omega \psi^\eps\, dx=1$, but in the proof of Theorem~\ref{thm:GF} (see for instance \cite{Leoni}), 
one actually constructs a family of sequences $\psi^\eps_t$ with $t\in[0,1]$ such that 
\begin{equation}\label{eq:fbhkj}
\psi^\eps_t\to \rho  \mbox{ in } L^1(\Omega),\quad \mbox{ and } 
\quad
\limsup_{\eps \to0^+}  \frac 1 {\eps} \int_\Omega g(\psi^\eps_t)  \, dx  +{\eps} \int_\Omega \frac1  2 |\na\psi^\eps_t|^2\, dx  \leq \J_0(\rho) \qquad \forall t\in [0,1]
\end{equation}
with $\psi^\eps_0\leq \rho_c \chi_E$,  $\psi^\eps_1 \geq \rho_c \chi_E$ and $t\mapsto\psi^\eps_t$ continuous.
The classical recovery sequence given by Theorem~\ref{thm:GF}  is then obtained by choosing $t_\eps$  so that  $\int_\Omega\psi^\eps_{t_\eps}\, dx= 1$.
For our proof, we will make a slightly different choice of $t_\eps$:
We claim that one can choose $t_\eps\in[0,1]$ so that 
\begin{equation}\label{eq:intphi}
 \int_\Omega \psi^\eps_{t_\eps} - g'(\psi^\eps_{t_\eps})\,dx = 1.
 \end{equation}
Indeed, we can write (see \eqref{eq:gnu}) $s - g'(s)={f^*}'(s-a)$ which is a non decreasing function of $s$ 
(see \eqref{eq:*'})
and so 
$$ \psi^\eps_{0} - g'( \psi^\eps_{0}) \leq \rho_c \chi_E - g'(\rho_c \chi_E) = \rho_c \chi_E,  \qquad  \psi^\eps_{1} - g'( \psi^\eps_{1}) \geq \rho_c \chi_E - g'(\rho_c \chi_E) = \rho_c \chi_E$$
(where we used the fact that $g$ is $C^1$ and has minimums at $0$ and $\rho_c$ and so $g'(\rho_c\chi_E) = 0 $ a.e.).
A continuity argument implies that there exists $t_\eps\in [0,1]$ such that \eqref{eq:intphi} holds.
We then denote $\psi^\eps = \psi^\eps_{t_\eps}$. We recall that $\psi^\eps\to \rho$ in $L^1$ and   a.e. in $\Omega$.

\medskip

We now define  the function $\rho^\eps$ by
$$\rho^\eps(x) = \psi^\eps(x) - g'(\psi^\eps(x)) = {f^*}'(\psi^\eps(x)-a).$$
Using \eqref{eq:*'}, \eqref{eq:f*'} and \eqref{eq:intphi}, we get
$$ \rho^\eps\geq0, \quad \rho^\eps\in L^2(\Omega), \quad \int_\Omega \rho^\eps\, dx=1.$$
Next, we note that $\rho^\eps$ converges  to $ \rho - g'(\rho) = \rho $ a.e.  (since $g'(0)=g'(\rho_c)=0$).
Since $\psi^\eps \leq \rho_c$, the monotonicity of ${f^*}'$ implies
$\rho^\eps\leq {f^*}'(\rho_c-a)$; we can thus use Lebesgue's dominated convergence theorem to get that $\rho^\eps \to \rho $ in $L^1(\Omega)$. 
\medskip

For this choice of $\rho^\eps$, \eqref{eq:eqg}  implies $g(\psi^\eps) = W(g^\eps) + \frac 1 2 (\rho^\eps-\psi^\eps)$ and so 
$$  \frac 1 {\eps} \int_\Omega g(\psi^\eps)  \, dx  +{\eps} \int_\Omega \frac1  2 |\na \psi^\eps|^2\, dx
 = \G_{\eps} (\rho^\eps,\psi^\eps).$$
In particular, \eqref{eq:fbhkj} gives
$$
 \limsup_{{\eps}\to0}\G_{\eps}(\rho^\eps,\psi^\eps)  
 =  \limsup_{{\eps}\to0} \frac 1 {\eps} \int_\Omega g(\psi^\eps)  \, dx  +{\eps} \int_\Omega \frac1  2 |\na \psi^\eps|^2\, dx
 \leq \J_0(\rho).
 $$
Finally,  \eqref{eq:minJ} implies
$$ \J_\eps(\rho^\eps ) \leq \G_{\eps}(\rho^\eps,\psi^\eps)$$ 
(importantly, this inequality requires $\int_\Omega\rho^\eps(x)\, dx = 1$ but not  $\int_\Omega \psi^\eps \, dx = 1$)
and so 
$$ \limsup_{\eps \to0}\J_{\eps}(\rho^\eps ) \leq  \limsup_{{\eps}\to0}\G_{\eps}(\rho^\eps,\psi^\eps)  \leq \J_0(\rho).$$
\end{proof}

\medskip

\medskip

\medskip

\section{Proof of Theorem \ref{thm:conv1} - part 1}\label{sec:lim1}
In this section, we prove the first part of Theorem \ref{thm:conv1}, namely 
the strong convergence of the density $\rho^{\eps_n} $ in $L^\infty(0,T;L^1(\Omega))$ to $\rho(x,t)$ satisfying  the continuity equation  \eqref{eq:weak11} and the energy dissipation property \eqref{eq:energy2}. We point out that this part does not require the energy convergence assumption \eqref{eq:EA}.
Below, we denote $\rho^\eps$ instead of $\rho^{\eps_n}$ to simplify the notations.

We can write the $\eps$ continuity equation  \eqref{eq:weak11}  as
$$\pa_t \rho^\eps + \div j^\eps =0$$
with  $j^\eps = \rho^\eps v^\eps$.
We then have the following a priori estimates:
\begin{lemma}\label{lem:unifestimates}
Let $\rho_{in}^\eps(x)$ be such that $\J_\eps(\rho^\eps_{in}) \leq M $
and  $\rho^{\eps} (x,t)$  be a solution 
of \eqref{eq:weak}-\eqref{eq:phi0} given by Theorem~\ref{thm:existence}.  
The followings hold:
\item[(i)]  $\J_{\eps}(\rho^\eps(t)) \leq M$ for all $t\geq 0$ and  $\| j^{\eps}\|_{L^2(0,\infty; L^1(\Omega))}\leq M $.
\item[(ii)]  If $W$ satisfies \eqref{eq:Wqg}  (that is if $f(\rho)$ satisfies (H3)), then there exists a constant $C(M)$ such that
 $\| \rho^\eps\| _{L^\infty(0,\infty;L^2(\Omega) )}\leq C(M)$,  $\| j^{\eps}\|_{L^{2}(0,\infty;L^{4/3}(\Omega))}\leq C(M) $ and 
$\| \rho^{\eps}(t)-\rho^{\eps}(s)\| _{W^{-1,4/3}(\Omega)} \leq C(M)\sqrt{t-s}$
 for any $0\leq s\leq t$.
\end{lemma}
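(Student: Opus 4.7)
The plan is to derive every estimate from the energy dissipation inequality \eqref{eq:diss} by combining it with mass conservation, the coercivity of $W$ provided by (H3), and Hölder interpolation. I would begin with part (i): the first assertion is immediate from \eqref{eq:diss} upon dropping the non-negative dissipation term, while the flux bound is obtained by the factorisation $|j^\eps|=\rho^\eps|v^\eps|=\sqrt{\rho^\eps}\cdot\sqrt{\rho^\eps}|v^\eps|$ and Cauchy--Schwarz in $x$, giving
$$ \|j^\eps(t)\|_{L^1(\Omega)}^2 \;\leq\; \|\rho^\eps(t)\|_{L^1(\Omega)}\int_\Omega \rho^\eps|v^\eps|^2\,dx \;=\; \int_\Omega \rho^\eps|v^\eps|^2\,dx,$$
thanks to mass conservation $\|\rho^\eps(t)\|_{L^1}=1$; integrating in $t$ and invoking \eqref{eq:diss} yields $\|j^\eps\|_{L^2(0,\infty;L^1)}^2\leq M$.

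For part (ii), the $L^\infty L^2$ bound on $\rho^\eps$ is where (H3) first plays a role. From \eqref{eq:JW1} and part (i) we have $\int_\Omega W(\rho^\eps(t))\,dx\leq \eps M$. Using \eqref{eq:Wqg}, i.e.\ $W(\rho)\geq \nu \rho^2$ for $\rho\geq C$, and splitting $\Omega$ according to whether $\rho^\eps(t)$ exceeds $C$, I obtain
$$\|\rho^\eps(t)\|_{L^2(\Omega)}^2 \;\leq\; C^2|\Omega| + \nu^{-1}\!\int_\Omega W(\rho^\eps(t))\,dx \;\leq\; C^2|\Omega|+\nu^{-1}\eps M,$$
uniform in $\eps\in(0,1]$. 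Next, for the $L^2 L^{4/3}$ flux bound, write
$$(\rho^\eps|v^\eps|)^{4/3} = (\rho^\eps)^{2/3}\,(\rho^\eps|v^\eps|^2)^{2/3}$$
and apply Hölder in $x$ with exponents $3$ and $3/2$ to obtain the pointwise-in-time inequality
$$\|j^\eps(t)\|_{L^{4/3}(\Omega)}^{2}\;\leq\; \|\rho^\eps(t)\|_{L^2(\Omega)}\int_\Omega\rho^\eps|v^\eps|^2\,dx.$$
Integrating in $t$ and combining the uniform $L^\infty L^2$ bound on $\rho^\eps$ with \eqref{eq:diss} gives $\|j^\eps\|_{L^2(0,\infty;L^{4/3})}\leq C(M)$.

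For the Hölder continuity in time, I would plug test functions of the form $\zeta(x,t)=\chi(t)\varphi(x)$ with $\varphi\in W^{1,4}(\Omega)$ into the weak formulation \eqref{eq:weak11}; since the boundary condition $\rho^\eps v^\eps\cdot n=0$ allows $\varphi$ without any trace restriction, one obtains the distributional identity $\tfrac{d}{dt}\!\int_\Omega\rho^\eps\varphi\,dx=\int_\Omega j^\eps\!\cdot\!\na\varphi\,dx$. Integrating from $s$ to $t$ and applying Cauchy--Schwarz in time and Hölder in space,
$$\left|\int_\Omega(\rho^\eps(t)-\rho^\eps(s))\varphi\,dx\right| \leq (t-s)^{1/2}\,\|j^\eps\|_{L^2(s,t;L^{4/3})}\,\|\na\varphi\|_{L^4(\Omega)},$$
and taking the supremum over $\varphi$ with $\|\varphi\|_{W^{1,4}}\leq 1$ delivers the stated $W^{-1,4/3}$ estimate with $C(M)$ from the previous step.

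The calculation is essentially routine; the only delicate point is the $L^\infty L^2$ bound, which genuinely requires the quadratic growth condition (H3) on $W$ (without it, the energy only controls $\rho^\eps$ in $L^1$). Everything downstream --- the flux interpolation and the time modulus --- is a standard consequence once the $L^\infty L^2$ bound is in hand.
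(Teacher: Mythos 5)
Your proof is correct and follows essentially the same route as the paper: the energy dissipation inequality plus mass conservation gives (i) via Cauchy--Schwarz, the bound $\int_\Omega W(\rho^\eps(t))\,dx\leq \eps M$ combined with the quadratic coercivity \eqref{eq:Wqg} gives the uniform $L^\infty_t L^2_x$ bound, and the same Hölder interpolation $\|j^\eps(t)\|_{L^{4/3}}^2\leq\|\rho^\eps(t)\|_{L^2}\int_\Omega\rho^\eps|v^\eps|^2\,dx$ followed by testing the continuity equation against $W^{1,4}$ functions yields the flux and time-modulus estimates exactly as in the paper. No gaps.
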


\begin{proof}
Inequality \eqref{eq:diss} gives
\begin{equation}\label{eq:JM}
\J_{\eps} (\rho^\eps(T)) +  \int_0^T\int_\Omega \frac{|j^\eps|^2}{\rho^\eps} \, dx\, dt \leq \J_\eps(\rho_{in})
\end{equation}
which immediately gives the first bound in (i). Furthermore, we have
$$  \int_\Omega  |j^\eps|\, dx \leq 
\left( \int_\Omega \rho^\eps\, dx \right)^{1/2}\left(   \int_\Omega \frac{|j^\eps|^2}{\rho^\eps} \, dx \right)^{1/2}
\leq \left(   \int_\Omega \frac{|j^\eps|^2}{\rho^\eps} \, dx \right)^{1/2}
$$
and so \eqref{eq:JM} implies
$$
\int_0^T\left(  \int_\Omega  |j^\eps|\, dx\right)^2\, dt \leq M.
$$

\medskip

To prove (ii), we note that, 
in view of \eqref{eq:JW1}, we also have
\begin{equation}\label{eq:Wbound}
\int_\Omega W(\rho^\eps(x,t))\, dx \leq \eps \J_{\eps} (\rho^\eps(t)) \leq \eps M
\end{equation}
and so \eqref{eq:Wqg} implies
the bound on $\rho^\eps$ in $L^\infty(0,T;L^2(\Omega) )$ depending only on $M$ (for $\eps\leq1$).
We then have:
$$  \int_\Omega  |j^\eps|^{4/3}\, dx \leq 
\left( \int_\Omega |\rho^\eps|^2\, dx \right)^{1/3}\left(   \int_\Omega \frac{|j^\eps|^2}{\rho^\eps} \, dx \right)^{2/3}
$$
and so \eqref{eq:JM} implies
$$
\int_0^T\left(  \int_\Omega  |j^\eps|^{4/3}\, dx\right)^{3/2}\, dt \leq M
$$

Finally, for a given test function $\psi \in H^1(\Omega)$, the continuity equation \eqref{eq:weak11} implies
$$
\int_\Omega \rho^\eps(x,t) \psi(x)\, dx - \int_\Omega \rho^\eps(x,s) \psi(x)\, dx = \int_s^t \int_\Omega j^\eps  \cdot\na \psi\, dx\, d\tau
$$ 
and so  
\begin{align*}
\left| 
\int_\Omega \big(\rho^\eps(x,t)- \rho^\eps (x,s) \big)\psi(x)\, dx \right|
& \leq \left(\int_s^t\left(  \int_\Omega  |j^\eps|^{4/3}\, dx\right)^{3/2}\, dt\right)^{1/2}\left(\int_s^t\left( \int_\Omega  |\na \psi|^4 \, dx\right)^{1/2}\, d\tau \right)^{1/2}\\
& \leq  C(M) \| \psi\|_{W^{1,4}(\Omega)}\left(t-s\right)^{1/2}
\end{align*}
and  (ii) follows.
\end{proof}

The main result of this section is the following proposition: 
\begin{proposition}\label{prop:rhostrong} 
 Let $\rho_{in}^\eps(x)$ be such that $\J_\eps(\rho^\eps_{in}) \leq M $
 and $\rho^\eps(x,t)$  be a  solution 
of \eqref{eq:weak}-\eqref{eq:phi0} given by Theorem \ref{thm:existence}. 
Consider a sequence such that $\eps_n\to 0$. The followings hold:
\item[(i)] There exists a subsequence (still denoted $\eps_n$) along which $\rho^{\eps_n}(t)$ converges  to $\rho(t)$ in $W^{-1,4/3}(\Omega)$ locally uniformly with respect to $t$   and $j^{\eps_n}$ converges to $j$ weakly in $ L^{2}(0,\infty;L^{4/3}(\Omega)) $.
\item[(ii)] 
There exists $v\in (L^2(\Omega\times(0,\infty),d\rho))^d$ such that $j=\rho v$ and the following continuity equation holds
\begin{equation}\label{eq:cont22}
\begin{cases}
 \pa_t \rho +\div \rho v =0,\\ 
 \rho(x,0)=\rho_{in}(x).
 \end{cases}
 \end{equation}
\item[(iii)] Up to another subsequence, $\rho^{\eps_n}(t)$ converges to $\rho(t)$ strongly in $L^1(\Omega)$, locally uniformly in $t$. Furthermore, for all $t>0$ we have 
$$\rho(t)\in BV(\Omega;\{0,\rho_c\}).$$
\end{proposition}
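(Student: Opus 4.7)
The plan is to prove (i)--(iii) sequentially, leveraging the uniform estimates of Lemma~\ref{lem:unifestimates} to bootstrap from weak compactness in negative-order spaces to the continuity equation, and finally to strong $L^1$ convergence with BV structure inherited from the $\Gamma$-convergence.

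For (i), the weak convergence of $j^{\eps_n}$ in $L^2(0,\infty;L^{4/3}(\Omega))$ is immediate from reflexivity. For $\rho^{\eps_n}$, Lemma~\ref{lem:unifestimates}(ii) shows that on any $[0,T]$ the family is pointwise bounded in $L^2(\Omega)$ and $C^{1/2}$-equicontinuous as a map into $W^{-1,4/3}(\Omega)$. Since $L^2(\Omega)\hookrightarrow\hookrightarrow W^{-1,4/3}(\Omega)$ (dual of the Rellich embedding $W^{1,4}\hookrightarrow\hookrightarrow L^2$), Arzel\`a--Ascoli produces a subsequence converging uniformly in $C([0,T];W^{-1,4/3}(\Omega))$ to a limit $\rho$.

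For (ii), I pass to the limit in \eqref{eq:weak11}: the term $\int\!\!\int \rho^{\eps_n}\partial_t\zeta\,dx\,dt$ converges by the uniform $W^{-1,4/3}$ convergence paired with $\partial_t\zeta\in L^1(0,T;W^{1,4})$, and $\int\!\!\int j^{\eps_n}\cdot\nabla\zeta$ converges by weak $L^2_tL^{4/3}_x$ convergence. The factorization $j=\rho v$ with $v\in L^2(d\rho)$ follows from the joint convexity and weak lower semicontinuity of the Benamou--Brenier-type functional
$$
(\mu,J)\longmapsto \int_0^\infty\!\!\int_\Omega \left|\frac{dJ}{d\mu}\right|^2\,d\mu,
$$
extended by $+\infty$ when $J\not\ll\mu$. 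Lemma~\ref{lem:unifestimates} provides $\iint |j^{\eps_n}|^2/\rho^{\eps_n}\leq M$, a bound preserved by the lower-semicontinuous extension, which forces the measure $j\,dx\,dt$ to be absolutely continuous with respect to $\rho\,dx\,dt$ with a density in $L^2(d\rho)$.

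For (iii), I first establish pointwise-in-$t$ $L^1$ convergence. The dissipation inequality \eqref{eq:diss} gives $\J_{\eps_n}(\rho^{\eps_n}(t))\leq M$ for every $t$; combined with \eqref{eq:JF} this yields $\F_{\eps_n}(\phi^{\eps_n}(t))\leq M$, so the classical Modica--Mortola compactness (via the BV bound \eqref{eq:BVbound} applied to a strictly increasing primitive of $\sqrt{2g}$) gives $L^1$-precompactness of $\phi^{\eps_n}(t)$. The middle term in \eqref{eq:JW1} yields $\|\rho^{\eps_n}(t)-\phi^{\eps_n}(t)\|_{L^2}^2\leq 2\eps_n M$, which together with the $L^2$ bound of Lemma~\ref{lem:unifestimates}(ii) promotes $L^1$-precompactness to $\rho^{\eps_n}(t)$. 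Any cluster point must coincide with the $W^{-1,4/3}$-limit $\rho(t)$ from (i), so the full sequence converges in $L^1$. The liminf part of Theorem~\ref{thm:Gamma} then forces $\rho(t)\in \BV(\Omega;\{0,\rho_c\})$ with $\J_0(\rho(t))\leq M$.

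The main obstacle is upgrading the pointwise-in-$t$ convergence to locally uniform convergence. I argue by contradiction: if there existed $\delta>0$ and $t_n\to t^*$ with $\|\rho^{\eps_n}(t_n)-\rho(t_n)\|_{L^1}\geq \delta$, the bound $\J_{\eps_n}(\rho^{\eps_n}(t_n))\leq M$ together with the equi-coercivity embedded in Theorem~\ref{thm:Gamma} would produce an $L^1$-cluster point of $\rho^{\eps_n}(t_n)$ which, thanks to the uniform $W^{-1,4/3}$ convergence and the $W^{-1,4/3}$-continuity of $\rho$, must equal $\rho(t^*)$. Separately, the uniform sublevel bound $\J_0(\rho(t))\leq M$ places $\{\rho(t):t\in[0,T]\}$ inside a fixed $L^1$-compact set (sublevel sets of the perimeter functional are $L^1$-compact by Rellich for BV); combined with the $W^{-1,4/3}$-continuity this forces $t\mapsto\rho(t)$ to be $L^1$-continuous, so $\rho(t_n)\to \rho(t^*)$ in $L^1$, contradicting $\delta>0$.
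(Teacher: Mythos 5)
Parts (i) and (ii) of your argument follow the paper's proof essentially verbatim: Arzel\`a--Ascoli in $C([0,T];W^{-1,4/3}(\Omega))$ from Lemma \ref{lem:unifestimates}, and the lower semicontinuity of $(\mu,J)\mapsto\int |J|^2/\mu$ to factor $j=\rho v$. For part (iii), however, you take a genuinely different route. The paper introduces the auxiliary function $\psi^n=F(\phi^{\eps_n})$ with $F'=\sqrt{2g}$, shows via the decomposition \eqref{eq:psin} that $\psi^n-\gamma\rho^{\eps_n}\to0$ in $L^\infty(0,T;L^2(\Omega))$ (using $\int_\Omega W(\rho^{\eps_n})\,dx\leq \eps_n M$ and $\|\rho^{\eps_n}-\phi^{\eps_n}\|_{L^2}^2\leq C\eps_n$), and then applies the Lions--Aubin type Lemma \ref{lem:LA} to the $\BV$-bounded family $\psi^n$ to obtain uniform-in-$t$ strong $L^1$ convergence in one stroke. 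You instead prove $L^1$ convergence for each fixed $t$ by Modica--Mortola compactness applied to $\phi^{\eps_n}(t)$ (transferred to $\rho^{\eps_n}(t)$ through the middle term of \eqref{eq:JW1}), and then upgrade to local uniformity by a contradiction argument resting on two compactness facts: equicoercivity of $\J_{\eps_n}$ along diagonal sequences $(\eps_n,t_n)$, and $L^1$-compactness of the sublevel set $\{\J_0\leq M\}$, which together with the $W^{-1,4/3}$-equicontinuity force $t\mapsto\rho(t)$ to be $L^1$-continuous. Both arguments are correct. The paper's route is more quantitative and self-contained --- it never invokes the compactness half of the Modica--Mortola theory, only the elementary inequality \eqref{eq:BVbound} --- while yours trades the construction of $\psi^n$ and Lemma \ref{lem:LA} for a classical compactness theorem plus a soft topological argument. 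Two points you should make explicit: (a) to pass from precompactness of $F(\phi^{\eps_n}(t))$ to that of $\phi^{\eps_n}(t)$ you need both the strict monotonicity of the untruncated primitive $F$ and the equi-integrability supplied by the uniform $L^2$ bound of Lemma \ref{lem:unifestimates}(ii) --- this is exactly where {\bf (H3)} enters, as in the paper; (b) the identification of the $L^1$ and $W^{-1,4/3}$ topologies on $\{\J_0\leq M\}$ uses that this set is bounded in $L^\infty$ (its elements are $\rho_c\chi_E$), so that the inclusion of this set into $W^{-1,4/3}(\Omega)$ is $L^1$-continuous; for $d\geq 4$ one does not have $L^1(\Omega)\hookrightarrow W^{-1,4/3}(\Omega)$ in general, so this restriction to a bounded set is not cosmetic.
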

Note that (i) and (ii) are classical. The most important statement is thus (iii) which proves in particular that we have phase separation   in the limit $\eps\to0$.

\begin{proof}
The a priori estimates of Lemma \ref{lem:unifestimates} give (i).
Furthermore, we can pass to the limit 
 in \eqref{eq:weak11} to get
 $$
\int_\Omega \rho_{in} (x) \zeta(x,0)\, dx + \int_0^\infty \int_\Omega \rho\, \pa_t\zeta +j \cdot \na \zeta \, dx = 0 
$$
for any function $\zeta\in C^\infty_c([0,\infty)\times\overline \Omega)$. 
This is the continuity equation \eqref{eq:cont22} if we can show that $j$ can be written in the form $\rho v$.
We prove that by using an argument that can be found, for example, in \cite{MRS}:
For a scalar measure $\mu$ and a vectorial measure $F$, we define the function
$$ \Theta: (\mu,F)\mapsto 
\begin{cases}
\displaystyle \int_0^T\int_\Omega \frac {|F|^2}{\mu}  & \mbox{  if } F \ll \mu \mbox{ a.e. } t\in [0,T] ;\\
+\infty & \mbox{ otherwise}.
\end{cases}
$$
This function $\Theta$ is  lower semi-continuous for the weak convergence of measure (see \cite{AFP}, Theorem 2.34).
Together with the uniform bound $\Theta (\rho^{\eps_n} ,j^{\eps_n} ) = \int_0^T\int_\Omega \rho ^{\eps_n} |v^{\eps_n}|^2 \leq C$ (see Lemma \ref{lem:unifestimates} (ii)), it implies that $j$ is absolutely continuous with respect to $\rho$ and that there exists $v(t,\cdot) \in L^2 (d\rho(t))$ such that $j = \rho v$.

\medskip

We must now prove the strong convergence of $\rho^{\eps_n}$.
We use the function  $F$ introduced in the proof of the liminf property (see \eqref{eq:FGHJ}). We recall that $F$ is a Lipschitz function such that
$F(0)=0$ and $ F'(s)  = \sqrt{2g(s)}$ for  $s\in(0,\rho_c)$ and that we then have (see \eqref{eq:BVbound}):
\begin{equation} \label{eq:BVphi}
  \int_\Omega |\na F(  {\phi^\eps})| \, dx\leq  \J_\eps (\rho^\eps) 
\end{equation}
where $\phi^\eps$ is the solution of \eqref{eq:phi1}.
The boundedness of the energy $\J_{\eps_n}(\rho^{\eps_n }) $ thus implies some a priori estimates for the auxiliary function
$$ \psi^n :=  F( \phi^{\eps_n }).$$
More precisely, \eqref{eq:BVphi} and Lemma \ref{lem:unifestimates} (i) imply that
\begin{equation}\label{eq:psiBV}
  \psi^n \mbox{  is bounded in } L^\infty((0,\infty);BV(\Omega)) .
\end{equation}
In order to get the strong convergence of $\psi^n$ in $x$ and $t$, we must get some uniform convergence in $t$ (weakly in space) and use a Lions-Aubin type Lemma to conclude. For this, we write
\begin{align}
\psi^n 
& = [ F(\phi^{\eps_n }) - F (\rho^{\eps_n }) ] +[ F(\rho^{\eps_n })-\gamma \rho^{\eps_n}]  +\gamma \rho^{\eps_n} \label{eq:psin}
\end{align}
and we are going to show that the first two terms in the right hand side go to zero (uniformly in $t$):
\begin{itemize}
\item Formula \eqref{eq:JW1} and the energy bound (Lemma \ref{lem:unifestimates} (i))  imply
$$ \| \rho^{\eps_n}(t)- \phi^{\eps_n}(t) \|_{L^2(\Omega)}^2 \leq2  \eps_n \J_\eps(\rho^{\eps_n}(t))\leq 2  \eps_n \J_\eps(\rho_{in})\leq C\eps_n$$
and since $F$ is Lipschitz, we deduce
\begin{equation}\label{eq:ghj1}
 \| F(\rho^{\eps_n}(t))- F( \phi^{\eps_n}(t))\|_{L^2(\Omega)}^2 
\leq C  \| \rho^{\eps_n}(t)-  \phi^{\eps_n}(t)\|_{L^2(\Omega)}^2  \leq C\eps_n \qquad \forall t>0.
\end{equation}
\item For the second term, we note that the function $\rho\mapsto F(\rho)-\gamma \rho$ vanishes when $\rho=0 $ and $\rho=\rho_c$, which are also the zeroes of $\rho\mapsto W(\rho)$.
So given $V_\delta$  a $\delta$ neighborhood of $\{0,\rho_c\}$ in $\R_+$,
the continuity of $F$ and $W$ together with \eqref{eq:Wqg} implies
$$ 
| F(\rho) -\gamma \rho|^2 \leq C_\delta W(\rho) \mbox{ for } \rho \notin V_\delta
$$ 
for some constant $C_\delta$ (more precisely,  \eqref{eq:Wqg} and the fact that $F$ is bounded implies this inequality for large $\rho$, the continuity together with a compactness argument then give the inequality for all $\rho \notin V_\delta$).
Since $F$ is Lipschitz, we also have
$$ | F(\rho)-\gamma \rho | \leq C\delta \mbox{ for } \rho\in V_\delta.$$
Formula \eqref{eq:JW1} and the energy bound (see \eqref{eq:Wbound}) then imply
\begin{align*}
\int_\Omega   |F(\rho^{\eps_n}(t)) -\gamma \rho^{\eps_n}(t)|^2 \, dx 
& \leq \int_{\{\rho^{\eps_n}(t)\in V_\delta\}}   C\delta^2 \, dx  + C_\delta \int_{\{\rho^{\eps_n}(t)\notin V_\delta\}}  W(\rho^{\eps_n}(t))\, dx\\
& \leq C|\Omega|  \delta^2 + C _\delta \eps_n   \J_\eps(\rho^{\eps_n}(t))\\
& \leq C|\Omega|  \delta^2 + C _\delta \eps_n    .
\end{align*}
We deduce
$$\limsup_{n\to\infty} \| F(\rho^{\eps_n} ) -\gamma \rho^{\eps_n}\|_{L^\infty(0,\infty;L^2(\Omega))} \leq C \delta. 
$$
Since this holds for all $\delta>0$, we must have
\begin{equation}\label{eq:ghj2}
\limsup_{n\to\infty} \| F(\rho^{\eps_n} ) -\gamma \rho^{\eps_n}\|_{L^\infty(0,\infty;L^2(\Omega))} =0.
\end{equation}
\end{itemize}
Finally, we already know that $  \rho^{\eps_n}$ converges locally uniformly in $t$, with respect to the  $W^{-1,4/3}(\Omega)$ norm, to $\rho$, we deduce from \eqref{eq:psin}, \eqref{eq:ghj1} and \eqref{eq:ghj2} that 
\begin{equation}\label{eq:psiH}
\psi^n(t) \to \rho(t) \mbox{ in $W^{-1,4/3}(\Omega)$, uniformly with respect to $t\in[0,T]$}
\end{equation}
(for any $T>0$).
\medskip

We now recall the following Lions-Aubin compactness type result (the proof of which is identical to that of Lemma B.1 in \cite{KMW2}):
\begin{lemma}\label{lem:LA}
Let $u_n$ be a sequence of function bounded in $ L^\infty(0,T ;L^1( \Omega))$ such that
$ u_n$ is bounded in $L^\infty((0,T);\BV(\Omega))$ and $u_n\to u$ in $L^\infty((0,T);W^{-1,4/3}(\Omega))$.
Then
$$ \sup_{t\in[0,T]} \|u_n(t)-u(t)\|_{L^1 (\Omega)} \to 0.$$
\end{lemma}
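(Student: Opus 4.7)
The plan is to combine the Rellich-Kondrachov compactness $BV(\Omega)\hookrightarrow L^1(\Omega)$ (valid since $\Omega$ is bounded with Lipschitz boundary) with the hypothesis of uniform-in-$t$ convergence in the weaker norm $W^{-1,4/3}(\Omega)$, and upgrade the pointwise-in-time $L^1$ convergence to uniform-in-time via a contradiction argument.

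\textbf{Pointwise step.} Fix $t\in [0,T]$. The sequence $u_n(t)$ is bounded in $\BV(\Omega)$, hence relatively compact in $L^1(\Omega)$. Any $L^1$-cluster point $w$ of $\{u_n(t)\}$ must coincide with $u(t)$: indeed the embedding $L^1(\Omega)\hookrightarrow W^{-1,4/3}(\Omega)$ is continuous, and by assumption $u_n(t)\to u(t)$ in $W^{-1,4/3}(\Omega)$, forcing $w=u(t)$. Since the $L^1$-cluster point is unique, the full sequence satisfies $u_n(t)\to u(t)$ in $L^1(\Omega)$. In particular, the lower semi-continuity of the BV norm under $L^1$ convergence gives $u(t)\in \BV(\Omega)$ with a uniform bound in $t$, so $u\in L^\infty(0,T;\BV(\Omega))$.

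\textbf{Uniform step.} Suppose the conclusion fails. Then there exist $\delta>0$, a subsequence (still denoted $u_n$), and times $t_n\in[0,T]$ with
$$\|u_n(t_n)-u(t_n)\|_{L^1(\Omega)}\geq \delta.$$
By compactness of $[0,T]$, extract $t_n\to t^*$. By BV compactness, extract a further subsequence so that $u_n(t_n)\to w$ in $L^1(\Omega)$ for some $w\in L^1(\Omega)$. On the other hand, using the uniform-in-$t$ convergence of the hypothesis,
$$\|u_n(t_n)-u(t_n)\|_{W^{-1,4/3}(\Omega)}\leq \|u_n-u\|_{L^\infty(0,T;W^{-1,4/3}(\Omega))}\to 0.$$
The limit $u$ inherits continuity in $t$ (with values in $W^{-1,4/3}(\Omega)$) from the uniform convergence of the continuous-in-time functions $u_n$ (cf.\ the Hölder estimate of Lemma \ref{lem:unifestimates}(ii) in the intended application), so $u(t_n)\to u(t^*)$ in $W^{-1,4/3}(\Omega)$. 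Combining, $u_n(t_n)\to u(t^*)$ in $W^{-1,4/3}(\Omega)$, and since $u_n(t_n)\to w$ in $L^1(\Omega)\hookrightarrow W^{-1,4/3}(\Omega)$, uniqueness of the limit yields $w=u(t^*)$. Applying the pointwise step with $t=t^*$ (or using that $u(t_n)$ is bounded in BV and converges to $u(t^*)$ in $W^{-1,4/3}$, so by the same argument $u(t_n)\to u(t^*)$ in $L^1$), we conclude
$$\|u_n(t_n)-u(t_n)\|_{L^1(\Omega)}\leq \|u_n(t_n)-u(t^*)\|_{L^1}+\|u(t^*)-u(t_n)\|_{L^1}\to 0,$$
contradicting the lower bound $\delta$.

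\textbf{Main obstacle.} The pointwise-in-time BV compactness is essentially automatic; the delicate point is turning it into uniform convergence in $t$. The key technical issue is ensuring that $u(t_n)\to u(t^*)$ in $W^{-1,4/3}$, which requires some form of time-regularity of the limit $u$. This follows because $u_n\to u$ in $L^\infty(0,T;W^{-1,4/3}(\Omega))$ transfers the time-continuity of the approximations to the limit; in the paper's application this continuity is provided by the Hölder bound of Lemma \ref{lem:unifestimates}(ii).
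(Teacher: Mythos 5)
Your overall strategy (pointwise compactness from the $\BV$ bound, identification of the limit through the weaker $W^{-1,4/3}$ topology, then a contradiction argument for uniformity in $t$) is sound, and the pointwise step is essentially correct --- note only that you do not actually need the continuous embedding $L^1(\Omega)\hookrightarrow W^{-1,4/3}(\Omega)$ (which would require $W^{1,4}(\Omega)\hookrightarrow L^\infty(\Omega)$, i.e.\ $d\le 3$): to identify the $L^1$-cluster point it suffices that both convergences imply convergence in the sense of distributions. The genuine gap is in the uniform step: your argument passes through a limit time $t^*$ and needs $u(t_n)\to u(t^*)$ in $W^{-1,4/3}(\Omega)$, which you justify by time-continuity of $u$ inherited from time-continuity of the $u_n$. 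But the lemma as stated makes no continuity-in-time assumption on $u_n$; you are importing a hypothesis from the intended application (the H\"older bound of Lemma \ref{lem:unifestimates}(ii)). As written, you therefore prove a statement with an extra hypothesis, not the lemma itself.

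The detour through $t^*$ is unnecessary, and removing it closes the gap. Set $v_n:=u_n(t_n)-u(t_n)$. By the pointwise step and lower semicontinuity of the $\BV$ seminorm, $\|u(t)\|_{\BV}\le\liminf_n\|u_n(t)\|_{\BV}$ is bounded uniformly in $t$, so $v_n$ is bounded in $\BV(\Omega)$ and hence relatively compact in $L^1(\Omega)$; on the other hand $\|v_n\|_{W^{-1,4/3}}\le\sup_t\|u_n(t)-u(t)\|_{W^{-1,4/3}}\to0$, so every $L^1$-cluster point of $v_n$ vanishes as a distribution, whence $v_n\to0$ in $L^1(\Omega)$, contradicting $\|v_n\|_{L^1}\ge\delta$. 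Equivalently --- and this is the route of the proof the paper invokes by reference (Lemma B.1 of \cite{KMW2}) --- one can package the same compactness--uniqueness argument into an Ehrling-type interpolation inequality $\|v\|_{L^1(\Omega)}\le\eta\|v\|_{\BV(\Omega)}+C_\eta\|v\|_{W^{-1,4/3}(\Omega)}$, valid for all $v\in\BV(\Omega)$ and all $\eta>0$, and apply it to $u_n(t)-u(t)$ uniformly in $t$; this yields the conclusion directly, with no contradiction argument and no reference to a limiting time.
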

Lemma \ref{lem:LA} together with \eqref{eq:psiBV} and \eqref{eq:psiH} imply
$$ \psi^n \to \rho \qquad \mbox{ strongly in }  L^\infty((0,T);L^1( \Omega)).$$
In particular, the lower semicontinuity of the $BV$ norm and \eqref{eq:psiBV}  imply that $\rho \in L^\infty((0,T);BV( \Omega))$.
\medskip

Finally, using \eqref{eq:psin} together with 
\eqref{eq:ghj1} and \eqref{eq:ghj2} we see that $\psi^n -\gamma \rho^{\eps_n}$ converges to zero strongly in $L^\infty(0,T;L^2(\Omega))$
so the strong convergence of $\psi^n$ also implies that
$$\rho^{\eps_n} \to \rho \qquad \mbox{ strongly in }  L^\infty((0,T);L^1( \Omega)).$$
\medskip

It remains to show that $\rho(t)$ is of the form $\rho_c\chi_{E(t)}$ for all $t$: We note that given $t>0$, we can extract a subsequence which converges a.e. in $\Omega$ so Formula \eqref{eq:JW1} and the energy bound  (see \eqref{eq:Wbound})  implies
 $$\int_\Omega W(\rho(t))\, dx = 0$$ 
hence  $\rho (x,t)\in \{0,\rho_c\}$ a.e. $x\in  \Omega$ (for all $t>0$) and so $\rho = \rho_c \chi_{\{\rho>0\}}$ which completes the proof  of Proposition \ref{prop:rhostrong}. 
\end{proof}

\medskip

\medskip

\medskip

\section{Proof of Theorem \ref{thm:conv1} - Part 2}

\subsection{First variation of the energy}
We proved in Theorem \ref{thm:Gamma} that the energy functional $\J_\eps$ $\Gamma$-converges to $\J_0$.
We now want to prove that if 
$\rho^\eps$ is a sequence of densities that converge to $\rho$ and if 
 $\J_\eps(\rho^\eps)$ converges to $\J_0(\rho)$, then we  have the convergence of the first variations. This is the statement of the next proposition which plays a crucial role in the proof of the second part of  Theorem \ref{thm:conv1}.

\begin{proposition}\label{prop:firstvar}
Given a sequence of functions $\rho^\eps\in L^1(\Omega)$  and $\phi^\eps$ the corresponding solution of \eqref{eq:phi0}. 
If $\rho^\eps\to\rho$ strongly in $L^1(\Omega)$ and 
\begin{equation}\label{eq:cvass}
\lim_{\eps\to 0} \J_\eps (\rho^\eps) =\J_0 (\rho),
 \end{equation}
then for all $\xi \in C^1(\Omega , \R^d)$ satisfying $\xi\cdot n=0$ on $\pa\Omega$, we have:
\begin{align}
&\lim_{\eps\to 0}
  \eps^{-1}  \int_\Omega 
 [f(\rho^\eps) +a\rho^\eps - \rho^\eps \phi^\eps ] \div \xi-  \rho^\eps \na \phi^\eps\cdot \xi  \, dx
   = \gamma \int_\Omega \left[  \div \xi - \nu \otimes \nu :D\xi\right] |\na \rho|
 \label{eq:limitdJ} 
\end{align}
where $f = \frac{\na \rho}{|\na \rho|}$ and $n$ denotes the outward normal unit vector to the fixed boundary $\pa \Omega $.
\end{proposition}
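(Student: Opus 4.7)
My plan is to recognize the left-hand side of \eqref{eq:limitdJ} as the first variation of $\J_\eps$ along the flow generated by $\xi$, to rewrite it---via the elliptic equation \eqref{eq:phi1}---as a Modica--Mortola-type first variation of $\F_\eps(\phi^\eps)$ up to a vanishing error, and then to pass to the limit by combining strict convergence of the total variation $\int|\na F(\phi^\eps)|$ with Reshetnyak's continuity theorem. The energy convergence hypothesis \eqref{eq:cvass} plays an essential role: it forces the ``energy gap'' $\J_\eps(\rho^\eps)-\F_\eps(\phi^\eps)$ to vanish and produces the strict BV convergence needed for Reshetnyak.

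\emph{Step 1 (integration by parts).} Using the definition \eqref{eq:W} of $W$ (with $\beta=\sigma=1$), I would write $f(\rho^\eps)+a\rho^\eps-\rho^\eps\phi^\eps=W(\rho^\eps)+\tfrac12(\rho^\eps-\phi^\eps)^2-\tfrac12(\phi^\eps)^2$, substitute $\rho^\eps=\phi^\eps-\eps^2\Delta\phi^\eps$ from \eqref{eq:phi1} into $-\rho^\eps\na\phi^\eps\cdot\xi$, and integrate by parts twice using the boundary conditions $\xi\cdot n=0$ and $\na\phi^\eps\cdot n=0$. The $\tfrac12(\phi^\eps)^2$ terms cancel and the pointwise identity $\na\phi^\eps\cdot\na(\na\phi^\eps\cdot\xi)=\tfrac12\xi\cdot\na|\na\phi^\eps|^2+\na\phi^\eps\otimes\na\phi^\eps:D\xi$ delivers the clean formula
\begin{align*}
\mathrm{LHS} = &\int_\Omega\left[\tfrac{1}{\eps}\bigl(W(\rho^\eps)+\tfrac12(\rho^\eps-\phi^\eps)^2\bigr)+\tfrac{\eps}{2}|\na\phi^\eps|^2\right]\div\xi\,dx \\
&-\eps\int_\Omega\na\phi^\eps\otimes\na\phi^\eps:D\xi\,dx,
\end{align*}
which is precisely the Modica--Mortola-type first variation of the auxiliary energy $\G_\eps(\rho^\eps,\phi^\eps)=\J_\eps(\rho^\eps)$ along the flow of $\xi$.

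\emph{Step 2 (closing the gap from $\J_\eps$ to $\F_\eps$).} The definition \eqref{eq:g} of $g$ gives the pointwise inequality $g(\phi^\eps)\le W(\rho^\eps)+\tfrac12(\rho^\eps-\phi^\eps)^2$, whose $\eps^{-1}$-scaled integral equals $\J_\eps(\rho^\eps)-\F_\eps(\phi^\eps)\ge 0$. The estimate $\|\phi^\eps-\rho^\eps\|_{L^2}^2\le 2\eps\J_\eps(\rho^\eps)\to 0$ yields $\phi^\eps\to\rho$ in $L^1(\Omega)$, so the classical Modica--Mortola liminf applied to $\F_\eps(\phi^\eps)$ gives $\liminf\F_\eps(\phi^\eps)\ge\gamma\int|\na\rho|=\J_0(\rho)$; combined with $\F_\eps(\phi^\eps)\le\J_\eps(\rho^\eps)\to\J_0(\rho)$ this forces $\F_\eps(\phi^\eps)\to\J_0(\rho)$ and the vanishing of the gap. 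Since $\div\xi\in L^\infty$, replacing the bulk term in Step~1 by $\eps^{-1}g(\phi^\eps)$ costs only an $o(1)$ error.

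\emph{Step 3 (Modica--Mortola first variation in the limit).} Setting $a_\eps:=\tfrac{g(\phi^\eps)}{\eps}$ and $b_\eps:=\tfrac{\eps}{2}|\na\phi^\eps|^2$, with $F$ as in \eqref{eq:FGHJ} so that $\na F(\phi^\eps)=\sqrt{2g(\phi^\eps)}\na\phi^\eps$ and $2\sqrt{a_\eps b_\eps}=|\na F(\phi^\eps)|$, AM--GM gives $a_\eps+b_\eps\ge|\na F(\phi^\eps)|$. Combined with $F(\phi^\eps)\to\gamma\rho$ in $L^1$ (dominated convergence, since $F$ is bounded and Lipschitz) and BV lower semicontinuity, Step~2 forces the \emph{strict} convergence of total variations $\int|\na F(\phi^\eps)|\to\gamma\int|\na\rho|$. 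The identity $(\sqrt{a_\eps}-\sqrt{b_\eps})^2=(a_\eps+b_\eps)-|\na F(\phi^\eps)|$ plus Cauchy--Schwarz then yields the equipartition estimate $\int|a_\eps-b_\eps|\,dx\to 0$, which allows me to replace both $a_\eps+b_\eps$ and $\eps|\na\phi^\eps|^2=2b_\eps$ by $|\na F(\phi^\eps)|$ in $L^1$. Reshetnyak's continuity theorem, applied to the continuous $1$-homogeneous integrand $(x,v)\mapsto|v|\div\xi(x)-\tfrac{v\otimes v}{|v|}:D\xi(x)$, then produces the claimed limit $\gamma\int[\div\xi-\nu\otimes\nu:D\xi]|\na\rho|$. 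The main technical obstacle is this last step: upgrading the weak-$*$ convergence of the vector measures $\na F(\phi^\eps)$ to convergence of their polar decomposition $\nu^\eps\otimes\nu^\eps\,|\na F(\phi^\eps)|$ (needed to handle the non-convex integrand $v\otimes v/|v|$) genuinely requires the strict total-variation convergence produced in Step~2, and is the place where the energy convergence hypothesis \eqref{eq:cvass} is used in an essential way.
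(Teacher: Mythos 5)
Your proposal is correct and follows essentially the same route as the paper's proof: the integration-by-parts identity of Step 1 is exactly Lemma \ref{lem:first}, and Steps 2--3 reproduce the paper's equipartition argument (namely $u_\eps^2+v_\eps^2-|\na F(\phi^\eps)|\to0$ in $L^1$, strict convergence of the total variations of $F(\phi^\eps)$, then Reshetnyak's continuity theorem). The only nitpick is that $2\sqrt{a_\eps b_\eps}=\sqrt{2g(\phi^\eps)}\,|\na\phi^\eps|\geq|\na F(\phi^\eps)|$ is an inequality rather than an equality (since $F$ is truncated above $\rho_c$ while $g>0$ there), but this does not affect your chain of estimates.
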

If $\rho^\eps$ is smooth enough, we can rewrite the left hand side of \eqref{eq:limitdJ}  as
$$
 -  \eps^{-1}   \int_\Omega 
 (f'(\rho^\eps)+a - \phi^\eps) \na \rho^\eps \cdot \xi\, dx
$$
Since we have $\J_\eps(\rho) = \frac 1 \eps \int_\Omega f(\rho)+a\rho  - \frac 1 2 \rho \phi^\eps\, dx$, this  is the first variation of $\frac{d}{ds} \J_\eps(\rho_s) |_{s=0} $ along the perturbation 
\begin{equation}\label{eq:rhos1}
\begin{cases}
\pa_s \rho_s + \na \rho_s \cdot \xi =0 \\
\rho_s|_{s=0} = \rho.
\end{cases}
\end{equation}
It might seem strange to the reader that we are using a perturbation which does not preserve the mass (unless we require that $\div \xi=0$). But this perturbation preserves characteristic functions, and is thus the appropriate perturbation when we are interested in the regime $\eps\ll1$.

Similar results have been proved  for  different energy functionals.
In particular, a classical result of Reshetnyak \cite{Reshetnyak} gives that if  $\chi_{E_\eps}$ converges to $\chi_E$ strongly in $L^1$, then the convergence of the perimeter 
$$ P(E_\eps):=\int|\na \chi_{E_\eps}| \to \int|\na \chi_E|,$$
implies the convergence of the first variation to
$$ \int(\div \xi - \nu \otimes\nu:D\xi) |D\chi_E|.$$
We refer to \cite{LM,LO,JKM} for similar results for other approximations of the perimeter functional.

Proceeding as in \cite{LO}, Proposition \ref{prop:firstvar} also implies the following time-dependent version:
\begin{corollary}\label{cor:bhlj}
Given a sequence of functions $\rho^\eps\in L^\infty(0,T;L^1(\Omega))$   such that $\rho^\eps\to\rho$ strongly in $L^\infty(0,T;L^1(\Omega))$ and $\phi^\eps$ the corresponding solution of \eqref{eq:phi0}.
If 
\begin{equation*}
\lim_{\eps\to 0} \int_0^T \J_\eps (\rho^\eps(t))\, dt = \int_0^T\J_0 (\rho(t))\, dt,
 \end{equation*}
then we have, for all $\xi \in C^1(\Omega , \R^d)$ satisfying $\xi\cdot n=0$ on $\pa\Omega$,
\begin{align*}
&\lim_{\eps\to 0}
  \eps^{-1} \int_0^T \int_\Omega 
 [f(\rho^\eps) +a\rho^\eps - \rho^\eps \phi^\eps ] \div \xi-  \rho^\eps \na \phi^\eps\cdot \xi  \, dx\, dt
   =\int_0^T  \int_\Omega \left[  \div \xi - \nu \otimes \nu :D\xi\right] |\na \rho|
\end{align*}
where $f = \frac{\na \rho}{|\na \rho|}$ and $n$ denotes the outward normal unit vector to the fixed boundary $\pa \Omega $.
\end{corollary}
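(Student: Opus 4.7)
The strategy is a Luckhaus--Modica-type sandwich argument: recognize the left-hand side of \eqref{eq:limitdJ} as the first variation of $\J_\eps$ along a transport perturbation, and then trap its limit between the one-sided derivatives of $\J_0$ using the $\Gamma$-liminf inequality (Theorem~\ref{thm:Gamma}(i)) together with the energy-convergence hypothesis \eqref{eq:cvass}. Let $\Phi_s:\overline\Omega\to\overline\Omega$ denote the flow of $\xi$ (globally defined on $\overline\Omega$ because $\xi\cdot n = 0$ on $\pa\Omega$) and set
$$\rho_s^\eps := \rho^\eps\circ\Phi_{-s}, \qquad \rho_s := \rho\circ\Phi_{-s}.$$
A direct calculation---using the symmetry of the Green's kernel $G_\eps$ of \eqref{eq:phi0}, integration by parts with $\xi\cdot n =0$, and the identity $\pa_s\rho_s^\eps + \xi\cdot\na\rho_s^\eps = 0$---identifies the quantity on the left-hand side of \eqref{eq:limitdJ} as
$$L^\eps = \frac{d}{ds}\bigg|_{s=0}\J_\eps(\rho_s^\eps).$$

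The crucial upper estimate exploits the envelope relation \eqref{eq:minJ}: since $\phi^\eps$ minimizes $\psi \mapsto \G_\eps(\rho^\eps, \psi)$, setting $\tilde\phi_s^\eps := \phi^\eps\circ\Phi_{-s}$ gives the inequality $\J_\eps(\rho_s^\eps) \leq \G_\eps(\rho_s^\eps, \tilde\phi_s^\eps)$, with equality at $s=0$. Expanding the right-hand side via the change of variables $x = \Phi_s(y)$ together with the uniform Taylor expansions $\det D\Phi_s(y) = 1 + s\,\div\xi(y) + s^2 r_1(s,y)$ and $(D\Phi_s(y))^{-T} = I - s(D\xi(y))^T + s^2 A(s,y)$ produces three quadratic-in-$s$ remainder terms, each of which turns out to be proportional to one of the bounded quantities $\eps^{-1}\int W(\rho^\eps)$, $\eps^{-1}\int (\rho^\eps - \phi^\eps)^2$, $\eps\int|\na\phi^\eps|^2$ times a factor bounded by $\|\xi\|_{C^2}$; in view of \eqref{eq:JW1}, each of these is controlled by $\J_\eps(\rho^\eps) \leq M$. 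One thus arrives at the uniform Taylor inequality
\begin{equation}\label{eq:planTE}
\J_\eps(\rho_s^\eps) \leq \J_\eps(\rho^\eps) + s L^\eps + s^2 R^\eps(s), \qquad \sup_\eps |R^\eps(s)| \leq CM,
\end{equation}
valid for $|s| \leq s_0$, with $C = C(\|\xi\|_{C^2})$.

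Combining \eqref{eq:planTE} with the $\Gamma$-liminf bound $\liminf_\eps \J_\eps(\rho_s^\eps) \geq \J_0(\rho_s)$ applied to $\rho_s^\eps \to \rho_s$ in $L^1(\Omega)$ (working with the natural unconstrained extensions of $\J_\eps$ and $\J_0$, whose $\Gamma$-convergence is immediate from the proof of Theorem~\ref{thm:Gamma}) and with the energy convergence $\J_\eps(\rho^\eps) \to \J_0(\rho)$ produces, for $|s|\leq s_0$,
$$\J_0(\rho_s) - \J_0(\rho) \leq s\,\liminf_\eps L^\eps + CMs^2 \ (s>0), \qquad \J_0(\rho_s) - \J_0(\rho) \leq s\,\limsup_\eps L^\eps + CMs^2 \ (s<0).$$
Dividing by $\pm s$ and sending $s \to 0^\pm$ squeezes both $\liminf_\eps L^\eps$ and $\limsup_\eps L^\eps$ to the single value $\frac{d}{ds}|_{s=0}\J_0(\rho_s)$. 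For $\rho = \rho_c \chi_E \in \BV(\Omega;\{0,\rho_c\})$ this is precisely the classical Reshetnyak formula for the first variation of the perimeter,
$$\frac{d}{ds}\bigg|_{s=0}\J_0(\rho_s) = \gamma \rho_c \int_{\pa^* E}\bigl[\div\xi - \nu\otimes\nu : D\xi\bigr]\, d\H^{d-1} = \gamma\int_\Omega \bigl[\div\xi - \nu\otimes\nu : D\xi\bigr]|\na\rho|,$$
which is exactly the right-hand side of \eqref{eq:limitdJ}. The time-dependent Corollary~\ref{cor:bhlj} then follows by repeating the sandwich after integration in $t$ and using the (uniform in $t$) remainder bound.

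The main technical obstacle in this plan is the uniform bound on the remainder in \eqref{eq:planTE}. Because of the $\eps^{-1}$ prefactors in $\G_\eps$, one must verify that every quadratic-in-$s$ correction produced by the change-of-variables expansion couples to a quantity already controlled, in an $\eps$-uniform fashion, by $\J_\eps(\rho^\eps)$. This is precisely why working through the $\G_\eps$-formulation and invoking the envelope identity \eqref{eq:minJ} is essential: it bypasses the need to differentiate in $s$ the implicitly-defined potential $\phi_s^\eps$ solving \eqref{eq:phi0} with right-hand side $\rho_s^\eps$, whose $s$-dependence is not easily controlled at the level of regularity we have.
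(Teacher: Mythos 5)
Your proof is correct, but it takes a genuinely different route from the paper. The paper reduces Corollary \ref{cor:bhlj} to the stationary statement (Proposition \ref{prop:firstvar}) and proves the latter by an equipartition argument: Lemma \ref{lem:first} rewrites $L^\eps$ in terms of $W(\rho^\eps)$, $(\rho^\eps-\phi^\eps)^2$ and $\na\phi^\eps\otimes\na\phi^\eps$; the energy convergence then forces $u_\eps-v_\eps\to0$ in $L^2$ and $u_\eps^2+v_\eps^2-|\na F(\phi^\eps)|\to0$ in $L^1$, after which Reshetnyak's continuity theorem identifies the limit of each piece separately. You instead run a Luckhaus--Modica sandwich: identify $L^\eps$ as $\frac{d}{ds}\big|_{s=0}\G_\eps(\rho^\eps_s,\tilde\phi^\eps_s)$, use the envelope inequality $\J_\eps(\rho_s^\eps)\le\G_\eps(\rho_s^\eps,\tilde\phi_s^\eps)$ to get a one-sided Taylor expansion with remainder controlled by $\J_\eps(\rho^\eps)\le M$, and squeeze against the $\Gamma$-liminf to pin $\lim L^\eps$ to the first variation of the perimeter. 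Your approach is softer (it never needs Reshetnyak's theorem for the approximating sequence, nor the equipartition identities), and your diagnosis of the key difficulty is exactly right: comparing with the \emph{transported} potential $\phi^\eps\circ\Phi_{-s}$ rather than the exact potential of $\rho_s^\eps$ is what makes the $\eps^{-1}$-weighted remainders controllable, and your expansion of $\G_\eps(\rho_s^\eps,\tilde\phi_s^\eps)$ reproduces exactly the decomposition of Lemma \ref{lem:first} at first order in $s$. What the paper's route buys in exchange is strictly more information --- the separate convergence of $\frac1\eps[W(\rho^\eps)+\frac12(\rho^\eps-\phi^\eps)^2]$ and of the matrix-valued measure $\eps\,\na\phi^\eps\otimes\na\phi^\eps$ --- whereas yours only yields the combined limit. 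Three small points to tighten: (a) the transport $\rho_s^\eps=\rho^\eps\circ\Phi_{-s}$ does not preserve $\int\rho=1$, so you must (as you note) work with the unconstrained envelope $\tilde\J_\eps(\eta):=\min_\psi\G_\eps(\eta,\psi)$, for which the liminf inequality of Theorem \ref{thm:Gamma} holds verbatim; (b) for $\xi$ merely $C^1$ the Jacobian expansion gives a remainder $o(s)$ uniformly in $\eps$ rather than $O(s^2)$, which still suffices for the sandwich (or prove the result for $C^2$ fields and conclude by density using the uniform bound of Lemma \ref{lem:first}); (c) the passage to the time-integrated statement needs either your ``integrate the sandwich in $t$'' argument together with dominated convergence for the difference quotients $s^{-1}[P(\Phi_s(E(t)),\Omega)-P(E(t),\Omega)]$, or the standard observation that pointwise liminf plus convergence of the time integrals upgrades $\J_{\eps}(\rho^{\eps}(t))\to\J_0(\rho(t))$ to $L^1(0,T)$ convergence, after which the stationary result applies for a.e.\ $t$. (Also note the right-hand side should carry the factor $\gamma$, as in Proposition \ref{prop:firstvar}; its absence in the displayed statement of the corollary is a typo.)
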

\medskip

A key tool in the proof of Proposition \ref{prop:firstvar} is the following lemma:
\begin{lemma}\label{lem:first}
Given $\rho(x)$,  $\phi^\eps(x)$ solution of \eqref{eq:phi1} and 
for all $\xi \in C^1(\Omega ; \R^d)$ satisfying $\xi\cdot n=0$ on $\pa\Omega$, we have
\begin{align*}
\frac{1}{ \eps}    \int_\Omega 
 [f(\rho ) +a\rho  - \rho \phi^\eps] \div \xi-  \rho \na \phi^\eps \cdot \xi  \, dx 
& =\frac{1}{ \eps}  \int_\Omega [ W(\rho)   + \frac1 2 ( \rho- \phi^\eps)^2 ] \div \xi \, dx \\
& \quad +  \frac \eps{2} \int_\Omega |\na \phi^\eps|^2 \div \xi \, dx-  \eps\int_\Omega \na \phi^\eps\otimes \na \phi^\eps:D\xi\, dx.
\end{align*}
\end{lemma}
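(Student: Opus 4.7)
The plan is a direct calculation: expand $W$, use the elliptic equation satisfied by $\phi^\eps$ to replace $\rho$ in the term $\int \rho\,\na\phi^\eps\cdot\xi\,dx$, and perform two integrations by parts (the ``stress-tensor'' identity for $\Delta u\,\na u\cdot\xi$). The boundary condition $\xi\cdot n=0$ will kill boundary terms involving $\div\xi$, and the Neumann condition $\na\phi^\eps\cdot n=0$ will kill the boundary term arising when integrating $\Delta\phi^\eps$ by parts.

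First I would rewrite the quantity in brackets on the right hand side. Since $W(\rho)=f(\rho)+a\rho-\tfrac{1}{2}\rho^{2}$, direct expansion gives
\begin{equation*}
W(\rho)+\tfrac{1}{2}(\rho-\phi^\eps)^{2} = f(\rho)+a\rho-\rho\phi^\eps+\tfrac{1}{2}(\phi^\eps)^{2}.
\end{equation*}
Consequently the identity to prove reduces, after cancelling the common term $\tfrac{1}{\eps}\int [f(\rho)+a\rho-\rho\phi^\eps]\div\xi\,dx$, to the identity
\begin{equation*}
-\frac{1}{\eps}\int_{\Omega}\rho\,\na\phi^\eps\cdot\xi\,dx \;=\; \frac{1}{2\eps}\int_{\Omega}(\phi^\eps)^{2}\div\xi\,dx \;+\;\frac{\eps}{2}\int_{\Omega}|\na\phi^\eps|^{2}\div\xi\,dx \;-\;\eps\int_{\Omega}\na\phi^\eps\otimes\na\phi^\eps : D\xi\,dx.
\end{equation*}

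Next I would substitute $\rho=\phi^\eps-\eps^{2}\Delta\phi^\eps$ (from \eqref{eq:phi1}) in the left hand side, splitting it into a ``zero-order'' piece and a Laplacian piece. For the zero-order piece, write $\phi^\eps\na\phi^\eps=\tfrac{1}{2}\na((\phi^\eps)^{2})$ and integrate by parts: the boundary contribution vanishes because $\xi\cdot n=0$, producing exactly $\tfrac{1}{2\eps}\int(\phi^\eps)^{2}\div\xi\,dx$. For the Laplacian piece, $\eps\int_\Omega\Delta\phi^\eps(\na\phi^\eps\cdot\xi)\,dx$, I would integrate by parts once in the form $\Delta\phi^\eps=\div(\na\phi^\eps)$; the boundary term carries $\na\phi^\eps\cdot n$, which vanishes by the Neumann condition in \eqref{eq:phi1}. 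Expanding $\na(\na\phi^\eps\cdot\xi)=(D^{2}\phi^\eps)\xi+(D\xi)^{T}\na\phi^\eps$ splits the bulk term into two. The first one, involving $D^{2}\phi^\eps$, equals $\tfrac{1}{2}\na(|\na\phi^\eps|^{2})\cdot\xi$; a final integration by parts (again using $\xi\cdot n=0$) turns this into $\tfrac{\eps}{2}\int |\na\phi^\eps|^{2}\div\xi\,dx$. The second one is precisely the tensor term $-\eps\int\na\phi^\eps\otimes\na\phi^\eps:D\xi\,dx$.

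The computation is elementary, but two things require care. One is the bookkeeping of signs and the symmetry in rewriting $\na\phi^\eps\cdot(D\xi)^{T}\na\phi^\eps=\na\phi^\eps\otimes\na\phi^\eps:D\xi$. The other, which I regard as the only real ``obstacle,'' is the justification of the integrations by parts: a priori $\rho$ is only in $L^{1}$ (or $L^{2}$ under \textbf{(H3)}), so $\phi^\eps$ has only $H^{1}$ regularity from \eqref{eq:phiweak}. The two identities needed are however the standard weak forms of the stress-tensor identity and the product-rule identity for $\phi^\eps\na\phi^\eps$, which hold for any $H^{1}$ function satisfying the Neumann condition weakly (as in \eqref{eq:phiweak}), and for $\xi\in C^{1}(\overline\Omega;\R^{d})$ with $\xi\cdot n=0$. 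So the only step one has to be a bit careful about is to present these two identities as consequences of the weak formulation of \eqref{eq:phi1} rather than of pointwise manipulations, after which the lemma follows by assembling the pieces.
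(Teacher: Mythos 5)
Your proposal is correct and follows essentially the same route as the paper: expand $W(\rho)+\tfrac12(\rho-\phi^\eps)^2$, use $\rho-\phi^\eps=-\eps^2\Delta\phi^\eps$ from \eqref{eq:phi1}, and apply the product-rule and stress-tensor integrations by parts, with the boundary terms killed by $\xi\cdot n=0$ and $\na\phi^\eps\cdot n=0$ exactly as in the paper's proof. The only (cosmetic) difference is that you substitute the elliptic equation into $\rho\,\na\phi^\eps\cdot\xi$ directly rather than first combining it with $\tfrac12|\phi^\eps|^2\div\xi$ into $\int(\rho-\phi^\eps)\na\phi^\eps\cdot\xi$, and your closing remark on justifying the integrations by parts is a reasonable extra precaution (settled by noting $\rho\in L^2$ forces $\phi^\eps\in H^2$) that the paper does not bother with.
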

\begin{proof}
The definition of $W(\rho)$ \eqref{eq:W} gives:
\begin{align*}
\frac{1}{ \eps}    \int_\Omega 
 [f(\rho ) +a\rho  - \rho \phi^\eps] \div \xi-  \rho \na \phi^\eps \cdot \xi  \, dx 
 & = \frac{1}{ \eps}  \int_\Omega [  W(\rho)   + \frac1 2  \rho^2  - \rho  \phi^\eps ] \div \xi - \rho \na \phi^\eps \cdot\xi\, dx \\
 & = \frac{1}{ \eps}  \int_\Omega [  W(\rho)   + \frac1 2 ( \rho-\phi^\eps)^2 ] \div \xi  -\frac 1 2   |\phi^\eps|^2 \div \xi  - \rho \na \phi^\eps \cdot\xi \, dx.
 \end{align*}
Next, we write (using the fact that $\rho - \phi^\eps= - \eps^2 \Delta \phi^\eps$):
 \begin{align*}
 \int_\Omega\frac 1 2 |\phi^\eps|^2 \div \xi +  \rho \na \phi^\eps \cdot \xi  \, dx
 & = \int_\Omega (\rho-\phi^\eps) \na \phi^\eps \cdot \xi \, dx\\
& = -\frac {\eps^2} 2 \int_\Omega |\na \phi^\eps|^2 \div \xi + \eps^2  \int_\Omega \na \phi^\eps\otimes \na \phi^\eps:D\xi \, dx.
\end{align*}
The lemma follows.
\end{proof}

\begin{proof}[Proof of Proposition \ref{prop:firstvar}]
In view of Lemma \ref{lem:first}, it is enough to show, under the assumptions of Proposition \ref{prop:firstvar}, that
\begin{equation}\label{eq:f1}
\lim_{\eps\to 0} \frac{1}{ \eps}  \int_\Omega [ W(\rho^\eps)   + \frac1 2 ( \rho^\eps- \phi^\eps)^2 ] \div \xi\, dx  +  \frac \eps{2 } \int_\Omega |\na \phi^\eps|^2 \div \xi \, dx=\gamma \int_\Omega \div \xi |\na \rho|
\end{equation}
and 
\begin{equation}\label{eq:f2}
\lim_{\eps\to 0} \eps\int_\Omega \na \phi^\eps\otimes \na \phi^\eps :D\xi \, dx =\gamma \int_\Omega \nu \otimes \nu :D\xi|\na \rho|.
\end{equation}

 The proof largely follows the proof of Proposition 5.2 in \cite{KMW2} and is given here for the sake of completeness.
  As in the proof of Theorem \ref{thm:Gamma}-(i) (liminf property), we use the function $F(s)$ defined by
$$  F(0)=0  ,\quad F'(s) = \sqrt{2g(s)}  \quad \forall s\in(0,\rho_c),\quad  F(s)=F(\rho_c) \mbox{ for } s \geq \rho_c.$$
We also introduce the functions $u_\eps$ and $v_\eps$ defined by
$$
  u_\eps^2 : =  \eps^{-1} [W(\rho^\eps)   + \frac1 2 ( \rho^\eps- \phi^\eps)^2 ]  \quad  \mbox{ and } \quad  v_\eps ^2=\frac{\eps}{2} |\na {\phi^\eps}|^2 .
$$
We have in particular (see \eqref{eq:JW1}):
$$\J_\eps (\rho^\eps)  = \int_\Omega  u_\eps^2 + v_\eps^2\, dx $$
and the definition of the function $g$ (see \eqref{eq:g}) implies $ \eps^{-1} g(\phi^\eps)  \leq u_\eps^2$ and so
\begin{equation}\label{eq:young}
|\na F(\phi^\eps)| \leq\sqrt{ 2 g(\phi^\eps)}  |\na {\phi^\eps}| \leq 2  u_\eps v_\eps =  u_\eps^2 + v_\eps^2 - (u_\eps-v_\eps)^2
\end{equation}

Next, the strong convergence of $\rho^\eps$ and \eqref{eq:ghj1} imply that $F(\phi^\eps) $ converges to $F(\rho)$ strongly in $L^1$. Since $\rho$ is a characteristic function and $F(0)=0$, $F(\rho_c) =\gamma \rho_c$, we deduce $F(\phi^\eps) \to F(\rho)= \gamma \rho$ strongly  in $L^1$. The lower semi-continuity of the BV norm implies:
$$ \liminf_{\eps\to 0} \int_\Omega |\na F(\phi^\eps)| \, dx \geq \gamma \int_\Omega |\na\rho|.$$
On the other hand,  the convergence assumption \eqref{eq:cvass} implies 
$$ \int_\Omega  u_\eps^2 + v_\eps^2\, dx =  \J_\eps (\rho^\eps) \to \gamma \int_\Omega |\na\rho|.$$
Inequality \eqref{eq:young} thus implies:
\begin{align}
& u_\eps^2 + v_\eps^2 - |\na F(\phi^\eps)| \to 0 \quad \mbox{ in } L^1(\Omega) \label{eq:conv2} \\
&  \int_\Omega   |\na F(\phi^\eps)| \, dx  \to \gamma \int_\Omega |\na\rho| \label{eq:conv1}\\
& u_\eps-v_\eps \to 0 \quad \mbox{ in } L^2(\Omega). \label{eq:conv3}
\end{align}
These facts  allow us to establish the limit \eqref{eq:f1}.
Indeed, using first the definition of $u^\eps$ and $v^\eps$, then the limit \eqref{eq:conv2} and finally  \eqref{eq:conv1} (together with Proposition \ref{prop:BV}), we can write:
\begin{align}
\lim_{\eps\to 0} 
\frac{1}{ \eps}  \int [ W(\rho^\eps)   + \frac1 2 ( \rho^\eps- \phi^\eps)^2 ] \div \xi  +  \frac \eps{2 } \int |\na \phi^\eps|^2 \div \xi 
&  =\lim_{\eps\to 0}  \int_\Omega ( u_\eps^2 + v_\eps^2) \div \xi \, dx \nonumber\\
&  =\lim_{\eps\to 0}  \int_\Omega |\na F(\phi^\eps)|  \div \xi \, dx\nonumber \\
 &= \gamma  \int_\Omega \div \xi |\na\rho| .\nonumber  
\end{align}
Furthermore, \eqref{eq:conv2} and \eqref{eq:conv3} yields:
\begin{equation}\label{eq:conv4}
2u_\eps^2 - |\na F(\phi^\eps)| \to 0 \quad \mbox{ in } L^1(\Omega) 
\end{equation}
which we use to pass to the limit in the term involving $\na {\phi^\eps}\otimes\na\phi^\eps$. Indeed, we can write
$$
2\eps \int_\Omega \pa_i {\phi^\eps} \pa_j {\phi^\eps} \pa_i\xi_j \, dx
 = 2\eps \int_\Omega \frac{\pa_i {\phi^\eps}}{|\na {\phi^\eps}|}\frac{ \pa_j {\phi^\eps} }{|\na {\phi^\eps}|} \pa_i\xi_j \, {|\na {\phi^\eps}|^2}dx
 =  \int_\Omega \frac{\pa_i {\phi^\eps}}{|\na {\phi^\eps}|}\frac{ \pa_j {\phi^\eps} }{|\na {\phi^\eps}|} \pa_i\xi_j \, 2u_\eps^2 dx
$$ 
and since $\frac{\pa_i {\phi^\eps}}{|\na {\phi^\eps}|}\frac{ \pa_j {\phi^\eps} }{|\na {\phi^\eps}|} \pa_i\xi_j $ is bounded in $L^\infty$, \eqref{eq:conv4} implies that
$$
\lim_{\eps\to 0} 
2\eps \int_\Omega \pa_i {\phi^\eps} \pa_j {\phi^\eps} \pa_i\xi_j \, dx
=
\lim_{\eps\to 0} 
 \int_\Omega \frac{\pa_i {\phi^\eps}}{|\na {\phi^\eps}|}\frac{ \pa_j {\phi^\eps} }{|\na {\phi^\eps}|} \pa_i\xi_j \, |\na F(\phi^\eps)|  dx.
$$
Using the fact that $F'(\phi)\geq 0$, we can also write
$$
\lim_{\eps\to 0} 
2\eps \int_\Omega \pa_i {\phi^\eps} \pa_j {\phi^\eps} \pa_i\xi_j \, dx
=
\lim_{\eps\to 0} 
 \int_\Omega \frac{\pa_i F( {\phi^\eps})}{|\na F( {\phi^\eps})|}\frac{ \pa_j F(\phi^\eps) }{|\na F(\phi^\eps)|} \pa_i\xi_j \, |\na F(\phi^\eps)|  dx.
$$
and using \eqref{eq:conv1} and Proposition \ref{prop:measure} we deduce
\begin{equation}\label{eq:lim2}
\lim_{\eps\to 0} 
2\eps \int_\Omega \pa_i {\phi^\eps} \pa_j {\phi^\eps} \pa_i\xi_j \, dx
=\gamma  \int_\Omega \frac{\pa_i \rho }{|\na \rho|}\frac{ \pa_j \rho }{|\na \rho |} \pa_i\xi_j \, |\na \rho|  
\end{equation}
which is \eqref{eq:f2}.

\end{proof}

\subsection{The pressure equation}
We now complete the proof of   Theorem \ref{thm:conv1}:
We use the result of the previous section to pass to the limit in 
\eqref{eq:weak12}, which we rewrite here using the notations of Proposition~\ref{prop:rhostrong}:
\begin{equation}\label{eq:frnj}
\int_0^\infty \int_\Omega j^{\eps_n} \cdot \xi - \eps_n^{-1}\rho^{\eps_n}\na \phi^{\eps_n}  \cdot \xi - \eps_n^{-1} [ \rho^{\eps_n} f'(\rho^{\eps_n} ) -f(\rho^{\eps_n})]\, \div\xi\, dx \, dt= 0
\end{equation}
where  $\xi \in C^\infty_c((0,\infty)\times\overline \Omega ; \R^d)$ is such that $\xi \cdot n=0$ on $\pa\Omega$.

We now introduce the function
\begin{align*}
p^{\eps_n}   & := \eps_n^{-1} \rho^{\eps_n}  [  f'(\rho^{\eps_n} ) + a  - \phi^{\eps_n} ] + m^{\eps_n}(t)  
\end{align*}
where $m^{\eps_n}(t)$, independent of $x$, is chosen so that 
$$
\int_\Omega p^{\eps_n}(x,t)\, dx = 0 \qquad \forall t>0.
$$
With this notation, we can
rewrite \eqref{eq:frnj} as
\begin{equation}\label{eq:bfk}
\int_0^\infty \int_\Omega j^{\eps_n}  \cdot \xi \, dx\, dt 
= 
 -  \eps_n^{-1} \int_0^\infty \int_\Omega [f(\rho^{\eps_n} )+a  \rho^{\eps_n}  - \rho^{\eps_n} \phi^{\eps_n} ]  \div \xi 
 -\rho^{\eps_n} \na \phi^{\eps_n} \cdot \xi 
 \, dx\, dt  +  \int_0^\infty \int_\Omega p^{\eps_n}   \, \div\xi\,  dx \, dt
\end{equation}
where we recognize the middle term as the first variation of the energy appearing in \eqref{eq:limitdJ}. 

We note that Lemma \ref{lem:first} implies 
$$
\left| \eps_n^{-1}  \int_\Omega [f(\rho^{\eps_n} )+a  \rho^{\eps_n}  - \rho^{\eps_n} \phi^{\eps_n} ]  \div \xi 
 -\rho^{\eps_n} \na \phi^{\eps_n} \cdot \xi \, dx\, \right|
\leq C \| D\xi\|_{L^\infty(\Omega)} \J_{\eps_n}  (\rho^{\eps_n})
$$
so \eqref{eq:bfk} together with Lemma \ref{lem:unifestimates} - (i) give
\begin{align*}
\left| \int_0^T \int_\Omega p^{\eps_n}   \, \div\xi\,  dx \, dt \right| & \leq C  \| D\xi\|_{L^1(0,T ; L^\infty( \Omega))}  + C \| j^\eps\|_{L^2(0,T;L^1( \Omega))} \| \xi\| _{L^2(0,T;L^\infty( \Omega))} \\
&\leq C(T)  \| \xi\| _{L^2(0,T;C^1(\Omega))} .
\end{align*}
This implies that
 $\na p^{\eps_n} $  is bounded in $L^2((0,T); (C^1(\Omega))^*)$ and (proceeding as in \cite{KMW2}) that
 $p^{\eps_n}  $ is uniformly bounded in $L^2((0,T); (C^s(\Omega))^*)$ and has a weak-* limit $p(x,t)$.

Finally, using Corollary \ref{cor:bhlj}, we can  pass to the limit in \eqref{eq:bfk} to get
$$
\int_0^\infty \int_\Omega j\cdot \xi \, dx\, dt 
= 
  - \gamma \int_0^\infty \int_\Omega \left[  \div \xi - \nu \otimes \nu :D\xi\right] |\na \rho|   + \int_0^\infty \int_\Omega p  \, \div\xi  \, dx\, dt 
$$
which is \eqref{eq:weakp} and thus complete the proof of Theorem \ref{thm:conv1}.

\begin{remark}
Using the fact that $W'(\rho_c) = f'(\rho_c) - \rho_c + a =0$ (since $\rho_c$ is a minimum of $W$), we can also write the pressure $p^{\eps_n} $ as follows:
$$
p^{\eps_n}  =  \eps_n^{-1} \rho^{\eps_n}  [  f'(\rho^{\eps_n} ) -f'(\rho_c)+ \rho_c  - \phi^{\eps_n} ] + m^{\eps_n}(t) .
$$
In this form, it is a bit easier to see why this function should converge despite the factor $\eps_n^{-1}$, since $  \rho^{\eps_n}  [  f'(\rho^{\eps_n} ) -f'(\rho_c)+ \rho_c  - \phi^{\eps_n} ] $ converges to 
$\rho   [  f'(\rho  ) -f'(\rho_c)+ \rho_c  - \phi ] $ which vanishes when $\phi=\rho = \rho_c\chi_{E}$.
However, proving that $p^{\eps_n}$ is bounded directly, and obtaining a stronger notion of convergence than $L^2((0,T); (C^s(\Omega))^*)$, does not seem realistic. Indeed, the properties of the limiting pressure $p(x,t)$ are related to that of the mean curvature of $\pa E(t)$. Showing that $p(x,t)$ is in some $L^p$ space would thus be equivalent to establishing some regularity properties for the solution of the limiting Hele-Shaw flow with surface tension, a very delicate problem.
\end{remark}

\medskip

\appendix

\section{A couple of facts about $BV$ functions}
We recall here a couple of important results about $\BV$ functions which we used in our proof (we refer the reader  to \cite{AFP}   for details). 
First we have
\begin{proposition}\label{prop:BV}
Let $f_k$ be a sequence of functions such that $f_k \to f$ in $L^1(\Omega)$ when $k\to\infty$.
Then
$$\liminf_{k\to\infty} \int_\Omega \zeta(x)  |\na f_k|
\geq  \int_\Omega \zeta(x) |\na f|  
$$
for all $\zeta \in C(\Omega)$ with $\zeta \geq 0$.
Furthermore, if 
$\int_\Omega |\na f_k| \to \int_\Omega |\na f|,$
then
$$
\lim_{k\to\infty} \int_\Omega\zeta (x) |\na f_k|    
= \int_\Omega\zeta(x)  |\na f| \qquad 
\mbox{ for all $\zeta\in C(\Omega)$}.$$
\end{proposition}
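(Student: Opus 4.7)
Proposition \ref{prop:BV} is a classical lower semicontinuity and continuity result for weighted total variations (of Reshetnyak type; see for example \cite{AFP}). My plan is to prove (i) by a duality argument and then upgrade it to (ii) by applying (i) to both $\zeta$ and $M-\zeta$ for a sufficiently large constant $M$, exploiting crucially the hypothesis that the total variations themselves converge.

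For (i), the starting point is the duality characterization of the weighted total variation: for any nonnegative $\zeta \in C(\Omega)$,
$$\int_\Omega \zeta\,|\na f| = \sup\Bigl\{-\int_\Omega f\,\div\psi\, dx : \psi\in C_c^1(\Omega;\R^d),\ |\psi(x)|\leq \zeta(x) \text{ in } \Omega\Bigr\}.$$
For any admissible $\psi$, the bound $|\psi|\leq \zeta$ together with the usual pairing of $Df_k$ with continuous test fields gives $-\int_\Omega f_k\,\div\psi\, dx = \int_\Omega \psi\cdot dDf_k \leq \int_\Omega \zeta\,|\na f_k|$. Since $\div\psi \in C_c^0(\Omega) \subset L^\infty(\Omega)$ and $f_k\to f$ in $L^1(\Omega)$, the left-hand side converges as $k\to\infty$, yielding
$$-\int_\Omega f\,\div\psi\, dx \leq \liminf_{k\to\infty} \int_\Omega \zeta\,|\na f_k|.$$
Taking the supremum over $\psi$ gives (i).

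For (ii), we may restrict to bounded $\zeta$ (the case used in our applications). Splitting $\zeta = (\zeta - \inf_\Omega\zeta) + \inf_\Omega\zeta$ and using $\int|\na f_k|\to\int|\na f|$ to dispatch the constant part, it suffices to handle $\zeta\geq 0$. Choose any constant $M\geq \sup_\Omega\zeta$; then $M-\zeta$ is nonnegative and continuous, and applying (i) to it produces
$$\liminf_{k\to\infty} \int_\Omega (M-\zeta)\,|\na f_k| \geq \int_\Omega (M-\zeta)\,|\na f|.$$
Because $\lim_k M\int_\Omega |\na f_k| = M\int_\Omega |\na f|$ by hypothesis, the $M$-term drops out and this rearranges to the reverse inequality $\limsup_k \int_\Omega \zeta\,|\na f_k|\leq \int_\Omega\zeta\,|\na f|$, which combined with the liminf bound from (i) yields the required equality. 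The only delicate point in the whole argument is justifying the duality formula for merely continuous (rather than smooth compactly supported) $\zeta$; this is handled in the standard way by approximating $\zeta$ uniformly on compact subsets $K\Subset\Omega$ and using the finiteness of the Radon measure $|\na f|$ to discard the contribution of $\Omega\setminus K$.
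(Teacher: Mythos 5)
Your proof is correct, but note that the paper does not actually prove Proposition \ref{prop:BV}: it is stated in the appendix as a recalled classical fact with a pointer to [AFP], so there is no argument to compare yours against. What you supply is the standard self-contained proof: the lower semicontinuity via the duality representation $\int_\Omega \zeta\,|\na f| = \sup\{-\int_\Omega f\,\div\psi : |\psi|\leq\zeta\}$ (each fixed $\psi$ gives a functional that is continuous under $L^1$ convergence of $f_k$ and dominated by $\int\zeta\,|\na f_k|$, then take the sup), followed by the usual trick of applying the liminf inequality to $M-\zeta$ and cancelling the $M\int|\na f_k|$ term using the hypothesis of convergence of total variations. Both steps are sound. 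Two small remarks. First, your restriction to bounded $\zeta$ in part (ii) is genuinely needed — the conclusion can fail for unbounded $\zeta\in C(\Omega)$, so the proposition as stated is slightly loose — but it is harmless here since every weight used in the paper (e.g.\ $\div\xi$ for $\xi\in C^1$) is bounded; for the liminf part, unbounded nonnegative $\zeta$ can be recovered from the bounded case by truncating $\zeta\wedge M$ and letting $M\to\infty$ via monotone convergence, so no restriction is needed there. Second, as you note, the only point requiring care is the representation formula for merely continuous nonnegative $\zeta$ (rather than $\zeta\equiv 1$); this follows from inner regularity of the Radon measure $|\na f|$ and is exactly the kind of detail [AFP] is being cited for.
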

 We also used the  following particular case of Reshetnyak's continuity theorem (see \cite{AFP} Theorem 2.39)
\begin{proposition}\label{prop:measure}
Let $f_k$ be a sequence of function such that $f_k \to f$ in $L^1(\Omega)$ and $\int_\Omega |\na f_k|\, dx \to \int_\Omega |\na f|$.
Then
$$
\lim_{k\to \infty}\int_\Omega  \zeta(x) \frac{\pa_i f_k}{|\na f_k|}\frac{\pa_jf_k}{|\na f_k|} |\na f_k| =
 \int_\Omega \zeta(x) \frac{\pa_i f}{|\na f|}\frac{\pa_jf}{|\na f|} |\na f|
 $$
for all $\zeta\in C(\overline \Omega)$.
\end{proposition}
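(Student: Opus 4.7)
I would deduce Proposition~\ref{prop:measure} as an immediate instance of Reshetnyak's continuity theorem (\cite{AFP}, Theorem~2.39). The idea is to view the distributional gradients $\na f_k$ and $\na f$ as $\R^d$-valued Radon measures on $\overline\Omega$ and to apply that theorem with the bounded, continuous integrand $\Phi(x,\nu):=\zeta(x)\,\nu_i\nu_j$ on $\overline\Omega\times S^{d-1}$ (equivalently, the $1$-homogeneous extension $\zeta(x)\lambda_i\lambda_j/|\lambda|$).

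\textbf{Verification of the hypotheses.} Reshetnyak's theorem requires (a) weak-$*$ convergence $\na f_k\overset{*}{\rightharpoonup} \na f$ as vector-valued Radon measures on $\overline\Omega$, and (b) convergence of total masses $|\na f_k|(\overline\Omega)\to|\na f|(\overline\Omega)$. Assertion (b) is exactly the quantitative hypothesis of the proposition. For (a), pick $\varphi\in C^1_c(\Omega;\R^d)$ and integrate by parts:
\[
\int_\Omega \varphi\cdot d(\na f_k) \;=\; -\int_\Omega f_k\,\div\varphi\,dx \;\longrightarrow\; -\int_\Omega f\,\div\varphi\,dx \;=\; \int_\Omega\varphi\cdot d(\na f),
\]
using only $f_k\to f$ in $L^1(\Omega)$. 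Together with the uniform bound on the total variations provided by (b), a standard density argument upgrades the test class to all $\varphi\in C(\overline\Omega;\R^d)$, giving the desired weak-$*$ convergence of Radon measures.

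\textbf{Application of Reshetnyak.} Using the polar decompositions $\na f_k = \nu_k\,|\na f_k|$ and $\na f = \nu\,|\na f|$ with $\nu_k,\nu$ Borel sections of $S^{d-1}$ defined $|\na f_k|$- and $|\na f|$-a.e., the integrand on the left-hand side of the proposition equals $\zeta(x)\,(\nu_k)_i(\nu_k)_j\,d|\na f_k|(x)$, and likewise for the right-hand side. Since $\Phi(x,\nu):=\zeta(x)\nu_i\nu_j$ is continuous and bounded on $\overline\Omega\times S^{d-1}$, Theorem~2.39 of \cite{AFP} yields precisely the claimed limit.

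\textbf{Main obstacle.} There is essentially no obstacle: once the statement is recognized as a Reshetnyak-type convergence for a $1$-homogeneous integrand, the only point that warrants a brief check is the identification of the Radon-Nikodym direction $\nu_k$ with $\na f_k/|\na f_k|$ on the absolutely continuous part of $|\na f_k|$ (and its analogue via the polar decomposition on any singular part), which is automatic. If one wished to avoid citing Reshetnyak, the same conclusion could be obtained by a direct truncation argument: approximate $\Phi$ by Lipschitz functions of $\nu$, use Proposition~\ref{prop:BV} to handle the $|\na f_k|$-integration against continuous multipliers, and conclude by a density argument exploiting the tightness coming from hypothesis (b).
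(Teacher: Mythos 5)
Your proposal is correct and takes the same route as the paper: the paper gives no proof of Proposition \ref{prop:measure} beyond citing it as a particular case of Reshetnyak's continuity theorem (\cite{AFP}, Theorem 2.39), and your argument is precisely the verification that the hypotheses of that theorem (weak-$*$ convergence of the gradient measures, obtained from $L^1$ convergence plus the total-variation bound, together with convergence of the total masses) are met for the integrand $\zeta(x)\nu_i\nu_j$.
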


\bibliographystyle{plain}
\bibliography{chemotaxis-soft}

\end{document}